\newcommand{\footremember}[2]{%
    \footnote{#2}
    \newcounter{#1}
    \setcounter{#1}{\value{footnote}}%
}
\newcommand{\footrecall}[1]{%
    \footnotemark[\value{#1}]%
}
\title{Sampling distribution for single-regression Granger causality estimators}
\author{%
A. J. Gutknecht\footremember{Juelich}{Institute of Neuroscience and Medicine (INM-6), Forschungszentrum Juelich, Germany.}%
\footremember{MEG_unit}{MEG Unit, Brain Imaging Center, Goethe University, Frankfurt am Main, Germany.}%
\footremember{sackler}{Sackler Centre for Consciousness Science and Dept. of Informatics, University of Sussex, Falmer, Brighton, UK}%
\ and L. Barnett\footrecall{sackler} \footremember{corrauth}{Corresponding author: \texttt{\href{mailto:l.c.barnett@sussex.ac.uk}{l.c.barnett@sussex.ac.uk}}\vspace{0.5em}}%
}
\begin{document}

\date\today

\maketitle

\vspace{-1em}

\begin{abstract}
We show for the first time that, under the null hypothesis of vanishing Granger causality, the single-regression Granger-Geweke estimator converges to a generalised $\chi^2$ distribution, which may be well approximated by a $\Gamma$ distribution. We show that this holds too for Geweke's spectral causality averaged over a given frequency band, and derive explicit expressions for the generalised $\chi^2$ and $\Gamma$-approximation parameters in both cases. We present an asymptotically valid Neyman-Pearson test based on the single-regression estimators, and discuss in detail how it may be usefully employed in realistic scenarios where autoregressive model order is unknown or infinite. We outline how our analysis may be extended to the conditional case, point-frequency spectral Granger causality, state-space Granger causality, and the Granger causality $F$-test statistic. Finally, we discuss approaches to approximating the distribution of the single-regression estimator under the alternative hypothesis.
\end{abstract}

\section{Introduction}
Since its inception in the 1960s, Wiener-Granger causality (GC) has found many applications in a range of disciplines, from econometrics, neuroscience, climatology, ecology, and beyond. In the early 1980s Geweke introduced the standard vector-autoregressive (VAR) formalism, and the Granger-Geweke population log-likelihood-ratio (LR) statistic \citep{Geweke:1982,Geweke:1984}. As well as furnishing a likelihood-ratio test for statistical significance, the statistic has been shown to have an intuitive information-theoretic interpretation as a quantitative measure of ``information transfer'' between stochastic processes \citep{Barnett:tegc:2009,Barnett:teml:2012}. In finite sample, the LR estimator requires separate estimates for the full and reduced VAR models, and as such admits the classical large-sample theory, and asymptotic $\chi^2$ distribution \citep{NeymanPearson:1933,Wilks:1938,Wald:1943}. However, it has become increasingly clear that the ``dual-regression'' LR estimator is problematic: specifically, model order selection involves a bias-variance trade-off which may skew statistical inference, impact efficiency, and produce spurious results, including negative GC values \citep{DingEtal:2006,Chen:2006,StokesPurdon:2017}.

An alternative \emph{single-regression} (SR) estimator which obviates the problem has been developed in various forms over the past decade \citep{Dhamala:2008b,Dhamala:2008a,Barnett:mvgc:2014,Barnett:ssgc:2015,Solo:2016}; but, since the large-sample theory no longer obtains, its sampling distribution has remained unknown until now. In addition to the improved efficiency and reduced bias of the SR estimator \citep{Barnett:mvgc:2014,Barnett:ssgc:2015}, knowledge of its sampling distribution under the null hypothesis of vanishing causality would allow to construct novel and potentially superior hypothesis tests, especially in the frequency domain where little is known about the sampling distribution of Geweke's spectral GC statistic\footnote{This applies even in the unconditional case where the Geweke spectral statistic \citep{Geweke:1982} requires only a single estimate of the full model.} \citep{Geweke:1982,Geweke:1984}. Closing this gap is thus the central object of the present study.

We begin in \secref{sec:prelim} with an overview of the theoretical and technical prerequisites of maximum-likelihood (ML) estimation, large-sample theory, the generalised $\chi^2$ distribution, VAR modelling, and Granger causality estimation. Since SR estimators (both time-domain and frequency-band-limited) are continuous functions of the ML estimator for the full VAR model, the problem of determining their asymptotic distributions can be approached via (a second-order version of) the well-known Delta Method \citep{LehmannRomano:2005}, in conjunction with a state-space spectral factorisation technique \citep{Wilson:1972,HandD:2012} which allows explicit derivation of reduced-model parameters in terms of the full-model parameters \citep{Barnett:ssgc:2015,Solo:2016}. In \secref{sec:asymdist} we show in this way that the asymptotic distributions are generalised $\chi^2$, and derive explicit expressions for the distributional parameters in terms of the parameters of the underlying VAR model.

Unlike under the classical theory, the asymptotic null distributions of the SR estimators have a dependence on the true null parameters, which presents a challenge for statistical inference. These issues are addressed in \secref{sec:statinf}, in which we present an asymptotically valid Neyman-Pearson test that accommodates this dependence. We discuss the properties of this test in terms of type I and type II error probabilities. In \secref{sec:disc} we complete our analysis by addressing the issues of model order selection and potentially infinite model orders. We elaborate on how our analysis might be extended in the future to the conditional case  \citep{Geweke:1984}, point-frequency spectral GC \citep{Geweke:1982,Geweke:1984}, state-space GC \citep{Barnett:ssgc:2015,Solo:2016}, and the GC $F$-statistic \citep[Chap. 11]{Hamilton:1994}. Finally, we discuss approaches to approximating the distribution of the SR estimator under the alternative hypothesis.

\section{Preliminaries} \label{sec:prelim}

\textit{Notation:} Throughout, superscript ``$\trop$'' denotes matrix transpose and superscript ``$*$'' conjugate transpose, while superscript ``$\rrop$'' refers to a reduced model (\secref{sec:gc}); $|\cdots|$ denotes the determinant of a a square matrix and $\trace\cdots$ its trace; $\vnorm\cdots$ denotes a consistent vector/matrix norm; $\prob\cdots$ denotes probability, $\expect\cdots$ expectation and $\var\cdots$ variance; $\pconverge$ denotes convergence in probability and $\dconverge$ convergence in distribution; ``$\log$'' denotes natural logarithm. Unless stated otherwise, all vectors are taken to be \emph{column} vectors.

\subsection{Maximum likelihood estimation and the large-sample theory} \label{sec:mle}

Suppose given a parametrised set of models on a state space $\sspace = \reals^n$, with multivariate parameter $\btheta = [\theta_1,\ldots,\theta_r]^\trop$ in an $r$-dimensional parameter space $\Theta \subseteq \reals^r$. A data sample of size $N$ for the model is then a sequence $\bu = \{\bu_1,\bu_2,\ldots,\bu_N\} \in \sspace^N$; in general, the $\bu_k$ will not be independent, and we denote the joint distribution of data samples of size $N$ drawn from the model with parameter $\btheta$ by $\sdist_N(\btheta)$, with probability density function (PDF) $p(\bu;\btheta)$. For sample data $\bu \in \sspace^N$, the \emph{average log-likelihood function} is defined to be\footnote{We work exclusively with average log-likelihoods, so from here on we drop the ``average log-''.}
\begin{equation}
	\mllhood(\btheta|\bu) = \frac1N \log p(\bu;\btheta)
\end{equation}
\ie, the logarithm of the PDF scaled by sample size. The \emph{maximum likelihood} and \emph{maximum-likelihood parameter estimate} are given respectively by
\begin{align}
	\mllhood(\bu) &= \sup\big\{\mllhood(\btheta|\bu) : \btheta \in \Theta\big\} \\
	\hbtheta(\bu) &\equiv \argmax\big\{\mllhood(\btheta|\bu) : \btheta \in \Theta\big\}
\end{align}
We assume the model is identifiable, so that $\hbtheta(\bu)$ is uniquely defined and $\mllhood(\bu) = \mllhood\big(\hbtheta(\bu)|\bu\big)$. On a point of notation, for any sample statistic $\hat f(\bu)$, given $\btheta \in \Theta$ we write $\hat f(\btheta)$ for the random variable $\hat f(\bU)$ with $\bU \sim \sdist_N(\btheta)$; $\hat f(\btheta)$ is thus a random variable parametrised by $\btheta$. For a parameter $\btheta$ itself, we write just $\hbtheta$ for its ML estimator $\hbtheta(\bU)$.

Let
\begin{equation}
	\mllhood_\alpha(\btheta|\bu) = \frac{\partial}{\partial\theta_\alpha} \mllhood(\btheta|\bu)\,, \qquad \mllhood_{\alpha\beta}(\btheta|\bu) = \frac{\partial^2}{\partial\theta_\alpha\partial\theta_\beta} \mllhood(\btheta|\bu)\,, \qquad \alpha,\beta = 1,\ldots,r
\end{equation}
The \emph{Fisher information matrix} associated with a parameter $\btheta \in \Theta$ is defined as
\begin{equation}
	\finfo_{\alpha\beta}(\btheta) \equiv -\expect{\mllhood_{\alpha\beta}(\btheta|\bU)} = -\int \mllhood_{\alpha\beta}(\btheta|\bu) p(\bu;\btheta) \,d\bu_1\ldots d\bu_N
\end{equation}
\ie, expectation is over $\bU \sim \sdist_N(\btheta)$. Writing $\pcmat(\btheta) = \finfo(\btheta)^{-1}$ for the inverse Fisher information matrix, under certain conditions---which will apply in our case---we have $\forall \btheta \in \Theta$
\begin{align}
	\hbtheta &\pconverge \btheta \label{eq:mlpcons} \\
	\sqrt N\,\big(\hbtheta - \btheta\big) &\dconverge \snormal\big(\bzero,\pcmat(\btheta)\big) \label{eq:mlpdist}
\end{align}
as sample size $N \to \infty$; that is, the maximum-likelihood parameter estimate is consistent and (appropriately scaled) asymptotically normally distributed with mean equal to the true parameter, and covariance given by the inverse Fisher information matrix.

Suppose now that we have a composite nested null hypothesis $H_0$ defined by $\btheta \in \Theta_0$, where $\Theta_0 \subset \Theta$ is the $s$-dimensional \emph{null subspace}, $s < r$. We write the maximum likelihood and maximum-likelihood parameter under $H_0$ as
\begin{align}
	\mllhood_0(\bu) &= \sup\big\{\mllhood(\btheta|\bu) : \btheta \in \Theta_0\big\} \\
	\hbtheta_0(\bu) &\equiv \argmax\big\{\mllhood(\btheta|\bu) : \btheta \in \Theta_0\big\}
\end{align}
respectively. The \emph{(log-)likelihood ratio} statistic given the data sample $\bu$ is then
\begin{equation}
	\llrat(\bu) \equiv 2\bracs{\mllhood(\bu) - \mllhood_0(\bu)}
\end{equation}
Note that $\mllhood(\bu) \ge \mllhood_0(\bu)$ always, so $\llrat(\bu) \ge 0$. Given $\btheta$, we write, as described earlier, $\llrat(\btheta)$ for the $\btheta$-parametrised random variable $\llrat(\bU)$ with $\bU \sim \sdist_N(\btheta)$. Wilks' Theorem \citep{Wilks:1938} then states that under $H_0$, the distribution of the sample log-likelihood ratio $\llrat(\btheta)$ is asymptotically $\chi^2$-distributed:
\begin{equation}
	\forall \btheta \in \Theta_0\,, \quad N \llrat(\btheta) \dconverge \chi^2(d) \label{eq:wilks}
\end{equation}
as sample size $N \to \infty$, where degrees of freedom $d = r-s$ is the difference in dimension of the full and null parameter spaces. Convergence is of order $N^{-\frac12}$. Crucially, \eqref{eq:wilks} holds for \emph{any} $\btheta \in \Theta_0$; \ie, given that the null hypothesis holds, it doesn't matter \emph{where} in the null space the true parameter lies.

\subsection{The generalised \texorpdfstring{$\chi^2$}{chi-squared} family of distributions} \label{sec:genchi2}

We introduce a family of distributions that will play a critical role in what follows. Let $\bZ \sim \snormal(\bzero,B)$ be a zero-mean $n$-dimensional multivariate-normal random vector with covariance matrix $B$, and $A$ an $n \times n$ symmetric matrix. Then \citep{Jones:1983} we write $\chi^2(A,B)$ for the distribution of the random quadratic form $Q = \bZ^\trop\!A \bZ$. If $A = B = I$, then $\chi^2(A,B)$ reduces to the usual $\chi^2(n)$. If $A$ is $m \times m$ and $C$ is $m \times n$, then $\chi^2(A,CBC^\trop) = \chi^2(C^\trop AC,B)$.

It is not hard to show \citep{Mohsenipour:2012} that if $B$ is positive-definite and $A$ symmetric (which will be the case for the generalised $\chi^2$ distributions we encounter), then $\chi^2(A,B) = \chi^2(\Lambda,I)$, where $\Lambda = \diag{\lambda_1,\ldots,\lambda_n}$ with $\lambda_1,\ldots,\lambda_n$ the eigenvalues of $BA$, or, equivalently, of $RAR^\trop$ where $R$ is the right-Cholesky factor of $B$ (so that $
R^\trop R = B$).  In that case, we have
\begin{equation}
	\lambda_1 U_1^2 + \ldots + \lambda_n U^2_n \sim \chi^2(A,B)\,, \qquad\text{where } U_i \text{ iid} \sim \snormal(0,1) \label{eq:gchi2}
\end{equation}
so that $\chi^2(A,B)$ is a weighted sum of independent $\chi^2$-distributed variables, and in particular if the $\lambda_i$ are all equal then we have a scaled $\chi^2(n)$ distribution. From \eqref{eq:gchi2}, moments of a generalised $\chi^2$ variable may be conveniently expressed in terms of the eigenvalues; thus we may calculate that for $Q \sim \chi^2(A,B)$
\begin{subequations}
\begin{align}
	\expect{Q} &= \mu\,\; = \phantom2 \sum_{i=1}^n \lambda_i    \label{eq:gc2mean} \\
	\var{Q}    &= \sigma^2 = 2\sum_{i=1}^n \lambda_i^2 \label{eq:gc2var}
\end{align} \label{eqs:gc2meanvar}%
\end{subequations}
Empirically, it is found that generalised $\chi^2$ variables (at least for $A$ symmetric and $B$ positive-definite) are very well approximated by $\Gamma$ distributions: specifically, we have $Q \approx \Gamma(\alpha,\beta)$ with
\begin{subequations}
\begin{alignat}{3}
	\alpha &= \frac{\mu^2}{\sigma^2} && \qqquad\text{(shape parameter)} \label{eq:gamparmsa} \\
	\beta &= \frac{\sigma^2}{\mu}   && \qqquad\text{(scale parameter)} \label{eq:gamparmsb}
\end{alignat} \label{eqs:gamparms}%
\end{subequations}

\subsection{VAR modelling} \label{sec:var}
Given a wide-sense stationary, purely nondeterministic $n$-dimensional vector stochastic process $\bU_t = [U_{1t},\allowbreak\ldots,U_{nt}]^\trop$, $-\infty < t < \infty$ \citep{Doob:1953}, under certain conditions (see below) it will have a stable, invertible---and in general infinite-order---VAR representation
\begin{equation}
	\bU_t = \sum_{k=1}^\infty A_k\bU_{t-k} + \beps_t \label{eq:var}
\end{equation}
where $\beps_t$ is a white noise process, the sequence of $n \times n$ autoregression coefficient matrices $A_k$ is square-summable, and the $n \times n$ residuals covariance matrix $\Sigma = \expect{\beps_t \beps_t^\trop}$ is positive-definite. Sufficient conditions for existence of such an invertible VAR representation---and which, importantly, also guarantee a similar representation for any \emph{sub}process---are given in \citet{Geweke:1982} \citep[see also][]{Masani:1966,Rozanov:1967}; we assume those conditions for all vector stochastic processes from now on.

If $A_k = 0$ for $k > p$ then \eqref{eq:var} defines a finite-order VAR($p$) model. We write $A = [A_1 \ldots A_p]$ (an $n \times pn$ matrix), and the model parameters are given by $\btheta = (A,\Sigma)$. The dimensionality of the parameter space is thus $pn^2 + \frac12 n(n+1)$. The autocovariance sequence for the process $\bU_t$ is defined by
\begin{equation}
	\Gamma_k = \expect{\bU_t \bU_{t-k}^\trop}\,, \qquad -\infty < k < \infty\,, \label{eq:acseq}
\end{equation}
and we have $\Gamma_{-k} = \Gamma_k^\trop$. By a standard trick, the process $\bU^{(p)}_t = \big[\bU_t^\trop \; \bU_{t-1}^\trop \ldots \bU_{t-(p-1)}^\trop\big]^\trop$ satisfies a $pn$-dimensional VAR($1$) model
\begin{equation}
	\bU^{(p)}_t = \bA \bU^{(p)}_{t-1} + \beps^{(p)}_t \label{eq:vartrick}
\end{equation}
where the $pn \times pn$ ``companion matrix'' is given by
\begin{equation}
	\bA =
	\begin{bmatrix}
		A_1 & A_2 & \ldots & A_{p-1} & A_p \\
		I   & 0   & \ldots & 0       & 0   \\
		0   & I   & \ldots & 0       & 0   \\
		\vdots & \vdots & \ddots & \vdots & \vdots   \\
		0   & 0   & \ldots & I       & 0
	\end{bmatrix} \qquad\quad \label{eq:compA}
\end{equation}
and  $\beps^{(p)}_t = \big[\beps_t^\trop \; 0 \ldots 0\big]^\trop$ with $pn \times pn$ covariance matrix
\begin{equation}
	\bSigma =
	\begin{bmatrix}
		\Sigma & 0 & \ldots & 0 \\
		0      & 0 & \ldots & 0 \\
		\vdots & \vdots & \ddots & \vdots \\
		0      & 0 & \ldots & 0
	\end{bmatrix} \qquad\quad \label{eq:compSig}
\end{equation}
The \emph{spectral radius} of the model is given by the largest absolute eigenvalue of $\bA$:
\begin{equation}
	\rho(A) = \max\{|z| : |Iz-\bA| = 0\} \label{eq:specrad}
\end{equation}
The model is stable iff $\rho(A) < 1$.

Taking the covariance of both sides of \eqref{eq:vartrick} yields
\begin{equation}
	\bGamma - \bA \bGamma \bA^\trop = \bSigma \label{eq:gamlyap}
\end{equation}
where $\bGamma$ is the $pn \times pn$ covariance matrix
\begin{equation}
	\bGamma = \expect{\bU^{(p)}_t \bU^{(p)\trop}_t} \label{eq:bGamma}
\end{equation}
The $k\ell$-block of $\bGamma$ is given by $\bGamma_{k\ell} = \Gamma_{\ell-k}$ for $k,\ell = 1,\ldots,p$. \eqref{eq:gamlyap} is a discrete-time Lyapunov (DLYAP) equation---which may be readily solved numerically---and is equivalent to (the first $p$ of) the Yule-Walker equations
\begin{equation}
	\Gamma_k = \sum_{\ell=1}^p A_\ell \Gamma_{k-\ell} + \delta_{k0} \Sigma \qquad -\infty < k < \infty \label{eq:yw}
\end{equation}
If the parameters $(A,\Sigma)$ are known, the autocovariance sequence may be calculated from \eqref{eq:gamlyap}, or recursively from \eqref{eq:yw}; conversely, $(A,\Sigma)$ may be calculated from $\Gamma_0,\ldots,\Gamma_p$, \eg, by Whittle's algorithm \citep{Whittle:1963}. In sample, given a data sequence $\bu = \{\bu_1,\ldots,\bu_N\}$, ML parameters $\big(\hA(\bu),\hSigma(\bu)\big)$ may be calculated via a standard ordinary least squares (OLS).

In the spectral domain \citep{HandD:2012}, let $\omega \in [0,2\pi]$ denote angular frequency in radians. The \emph{transfer function} for the VAR model \eqref{eq:var} is defined as
\begin{equation}
	\Psi(\omega) = \Phi(\omega)^{-1} \label{eq:trfun}
\end{equation}
where
\begin{equation}
	\Phi(\omega) = I-\sum_{k = 1}^\infty A_k e^{-i\omega k} \label{eq:itrfun}
\end{equation}
is the Fourier transform of the VAR coefficients sequence. The \emph{cross-power spectral density} (CPSD) matrix $S(\omega)$ is given by the Fourier transform of the autocovariance sequence
\begin{equation}
	S(\omega) = \sum_{k = -\infty}^\infty \Gamma_k e^{-i\omega k} \label{eq:cpsd}
\end{equation}
and conversely,  the autocovariance sequence is the inverse transform of the CPSD:
\begin{equation}
	\Gamma_k = \frac1{2\pi} \int_0^{2\pi} S(\omega) e^{i\omega k} d\omega \label{eq:acov}
\end{equation}
$S(\omega)$ is Hermitian $\forall\omega$, and satisfies the factorisation \citep{WienerMasani:1957}
\begin{equation}
	S(\omega) = \Psi(\omega) \Sigma \Psi(\omega)^* \label{eq:specfac}
\end{equation}
The CPSD $S(\omega)$, via \eqref{eq:specfac},  uniquely determines the VAR parameters, which may be factored out computationally, \eg, by Wilson's algorithm \citep{Wilson:1972}.

\subsection{Granger-Geweke causality} \label{sec:gc}

\citet{Geweke:1982} defines the population (unconditional\footnote{In this study we only address the unconditional case; but see \secref{sec:extend}.})  Granger causality statistic in the following context: suppose that the process \eqref{eq:var} is partitioned into subprocesses $\bU_t = [\bX_t^\trop\;\bY_t^\trop]^\trop$ of dimension $n_x,n_y$ respectively. The assumed regularity conditions on $\bU_t$ \citep[Sec.~2]{Geweke:1982} ensure that the subprocess $\bX_t$ will itself admit a stable, invertible VAR representation
\begin{equation}
	\bX_t = \sum_{k=1}^\infty A^\rrop_k\bX_{t-k} + \beps^\rrop_t \label{eq:varr}
\end{equation}
with square-summable coefficients $A^\rrop_k$ and positive-definite residuals covariance matrix $\Sigma^\rrop = \expect{\beps^\rrop_t \beps^{\rrop\trop}_t}$. To define Granger causality, the \emph{reduced} regression \eqref{eq:varr} is contrasted with the \emph{full} regression; that is, the $x$-component
\begin{equation}
	\bX_t = \sum_{k=1}^\infty A_{k,xx} \bX_{t-k} + \sum_{k=1}^\infty A_{k,xy} \bY_{t-k} + \beps_{xt} \label{eq:varf}
\end{equation}
of \eqref{eq:var}. We stress here that:
\begin{enumerate}
	\item The reduced model parameters $(A^\rrop,\Sigma^\rrop)$ are fully determined by the full model parameters $(A,\Sigma)$.
	\item Even if the full regression \eqref{eq:varf} has finite order, the reduced regression \eqref{eq:varr} will in general \emph{not} have finite order.
\end{enumerate}
The population Granger causality from $\bY \to \bX$ for the VAR \eqref{eq:var} with parameters $\btheta$ is then defined as
\begin{equation}
	F_{\bY\to\bX}(\btheta) = \log\frac{\big|\Sigma^\rrop\big|}{|\Sigma_{xx}|} \label{eq:popgc}
\end{equation}
The justification for the description of \eqref{eq:popgc} as a ``causal'' statistic, is as follows: $\beps_{xt}$ in \eqref{eq:varf} represents the residual error associated with the optimal least-squares prediction $\cexpect{\bX_t}{\bU_{t-1},\bU_{t-2},\ldots} = \sum_{k=1}^\infty A_{k,xx} \bX_{t-k} + \sum_{k=1}^\infty A_{k,xy} \bY_{t-k} $ of $\bX_t$ by its own past $\bX_{t-1},\bX_{t-2},\ldots$ and the past $\bY_{t-1},\bY_{t-2},\ldots$ of $\bY_t$. By contrast, $\beps^\rrop_t$ in \eqref{eq:varr} represents the residual error associated with the optimal prediction $\cexpect{\bX_t}{\bX_{t-1},\bX_{t-2},\ldots} = \sum_{k=1}^\infty A^\rrop_k\bX_{t-k}$ of $\bX_t$ by its own past alone. Magnitudes of residual prediction errors are then quantified by the generalised variances $|\Sigma_{xx}|$ and $\big|\Sigma^\rrop\big|$ respectively \citep{Wilks:1932,Barrett:2010}, leading to the interpretation of \eqref{eq:popgc} as the extent to which inclusion of the past $\bY_{t-1},\bY_{t-2},\ldots$ of $\bY_t$ in the predictor set reduces the prediction error for $\bX_t$.

In the frequency domain, \citet{Geweke:1982} defines the (population, unconditional) spectral Granger causality at angular frequency $\omega$ by
\begin{equation}
	f_{\bY\to\bX}(\omega;\btheta) = \log\frac{|S_{xx}(\omega)|}{\big| S_{xx}(\omega) - \Psi_{xy}(\omega) \Sigma_{yy|x} \Psi_{xy}(\omega)^*\big|} \label{eq:sgc}
\end{equation}
\citet{Barnett:gcfilt:2011} introduce \emph{band-limited} (frequency-averaged) spectral Granger causality
\begin{equation}
	f_{\bY\to\bX}(\frange;\btheta) = \frac1{|\frange|} \int_\frange f_{\bY\to\bX}(\omega;\btheta)\,d\omega \label{eq:sgcbl}
\end{equation}
where the frequency range $\frange$ is a measurable subset of $[0,2\pi]$ (in practice usually an interval). $f_{\bY\to\bX}(\omega;\btheta)$ averaged across all frequencies yields the corresponding time-domain GC \citep{Geweke:1982}; that is,
\begin{equation}
	f_{\bY\to\bX}([0,2\pi];\btheta) = \frac1{2\pi} \int_0^{2\pi} f_{\bY\to\bX}(\omega;\btheta)\,d\omega = F_{\bY\to\bX}(\btheta) \label{sgcint}
\end{equation}

\subsubsection{Likelihood-ratio estimation} \label{sec:gcmle}

Suppose given a finite-order VAR model.
\begin{equation}
	\bU_t = \sum_{k=1}^p A_k\bU_{t-k} + \beps_t \label{eq:varp}
\end{equation}
for the process $\bU_t = [\bX_t^\trop\;\bY_t^\trop]^\trop$. For now, we assume that the model order $p$ is known (in \secref{sec:infestmod} we discuss infinite-order VAR models and model order selection). How then, given a data sequence $\bu = \{\bu_1,\ldots,\bu_N\}$ sampled from \eqref{eq:varp} with unknown parameters $\btheta = (A,\Sigma)$, might $F_{\bY\to\bX}(\btheta)$ be \emph{estimated}? On the face of it, $F_{\bY\to\bX}(\btheta)$ is---uncoincidentally, as remarked in \citet{Geweke:1982}---a population log-likelihood-ratio statistic, since the maximum (average) log-likelihood for a finite-order VAR model of the form \eqref{eq:varp} is, up to a constant additive term, just $-\frac12 \log|\hSigma(\bu)|$, where $\hSigma(\bu)$ is the ML (OLS) estimate for the actual residuals covariance matrix $\Sigma$. But as regards estimation of $F_{\bY\to\bX}(\btheta)$, a problem arises: while the full model order $p$ may be finite, the reduced model \eqref{eq:varr} will in general be of \emph{infinite} order. We might be tempted---as suggested in \citet{Geweke:1982,Geweke:1984}, and, until recently, standard practice---to simply truncate the reduced model at  order $p$. Then \eqref{eq:varr} becomes a nested sub-model of \eqref{eq:varf}, and plugging in maximum-likelihood estimates $\hSigma(\bu)$, $\hSigma^\rrop(\bu)$ for the residuals covariance matrices $\Sigma$, $\Sigma^\rrop$  in \eqref{eq:popgc} yields a true log-likelihood-ratio sample statistic
\begin{equation}
	\hF^\lre_{\bY\to\bX}(\bu) = \log\frac{\big|\hSigma^\rrop(\bu)\big|}{|\hSigma_{xx}(\bu)|}
	\label{eq:sampgclr}
\end{equation}
for the composite null hypothesis
\begin{equation}
	H_0 : A_{1,xy} = \ldots = A_{p,xy} = 0 \label{eq:H0p}
\end{equation}
But here a problem arises: the resulting reduced model will be misspecified, and failure to take into account sufficient lags of $\bX_t$ in the reduced regression biases the resulting Granger causality estimator. Noting that a VAR($p$) model is also VAR($q$) for $q > p$ (coefficients $A_k, p < k \le q$ can simply be set to zero), we could attempt to remedy the situation by selecting a parsimonious model order $q > p$ for the reduced model by a standard (data sample length-dependent) model order selection criterion \citep{McQuarrie:1998}, and extend the full model to order $q$. However, in doing so the full model becomes over-specified and the variance of the resulting estimator is inflated. Furthermore, since the estimated model order will increase with sample length $N$, it is not clear whether the estimator will be consistent in any meaningful sense. We discuss this further in \secref{sec:infestmod}. This conundrum was explicitly identified by \citet{StokesPurdon:2017}\footnote{\citet{StokesPurdon:2017}, having identified the LR estimator as problematic, concede that at the time they were unaware that there were already estimators which obviate the problem \citep{StokesPurdon:2018}. Other claims made in \citet{StokesPurdon:2017} have also come under critical scrutiny \citep{FaesEtal:2017,BarnettEtal2018a,BarnettEtal2018b,DhamalaEtal:2018}.}, although its symptoms had previously been noted, particularly in the spectral domain \citep[see \eg,][]{DingEtal:2006,Chen:2006}\footnote{But note that the ``block-decomposition'' method presented in \citet{Chen:2006} to address the conundrum---essentially an attempt at constructing a single-regression estimator (see \secref{sec:gcsre} below)---is incorrect \citep{Solo:2016}.}.

\subsubsection{Single-regression estimation} \label{sec:gcsre}

The above issues may, however, be sidestepped. Given a finite-order VAR($p$) model \eqref{eq:varp} (again, we assume that $p$ is known, and discuss infinite-order VAR models and model order selection in \secref{sec:infestmod}), the reduced VAR \eqref{eq:varr} may not be assumed finite-dimensional, but the reduced residuals covariance matrix $\Sigma^\rrop$  will nonetheless be a continuous, \emph{deterministic} function
\begin{equation}
	\Sigma^\rrop = \Sigr(\btheta) \label{eq:sigfun}
\end{equation}
of the finite-dimensional full-model parameters $\btheta = (A,\Sigma)$, with $V(\btheta) = \Sigma_{xx}$ for $\btheta \in \Theta_0$. Given parameters $(A,\Sigma)$, the function $\Sigr(\btheta)$ may be computed numerically to desired precision by spectral factorisation in the frequency domain \citep{Dhamala:2008b,Dhamala:2008a}, spectral factorisation in the time domain domain \citep{Barnett:mvgc:2014} or by a state-space method \citep{Barnett:ssgc:2015,Solo:2016} which devolves to solution of a discrete algebraic Riccati equation (DARE); see \apxref{sec:varpss}. Now from \eqref{eq:popgc} and \eqref{eq:sigfun} the population GC is
\begin{equation}
	F_{\bY\to\bX}(\btheta) = \log\frac{|\Sigr(\btheta)|}{|\Sigma_{xx}|} \label{eq:popgcsr}
\end{equation}
Then given a data sample $\bu = \{\bu_1,\ldots,\bu_N\}$ we need only estimate the full VAR($p$) model \eqref{eq:varp}, to obtain full-model ML parameter estimates $\hbtheta(\bu)$; the reduced model estimate $\hSigma^\rrop(\bu) = \Sigr\big(\hbtheta(\bu)\big)$ is calculated directly from the full-model estimates by one of the techniques described above, yielding the \emph{single-regression Granger causality estimator}
\begin{equation}
	\hF^\sre_{\bY\to\bX}(\bu) = \log\frac{\big|\Sigr\big(\hbtheta(\bu)\big)\big|}{\big|\hSigma_{xx}(\bu)\big|} \label{eq:sampgcsr}
\end{equation}
Since ML parameter estimates are consistent, $\hSigma_{xx}\big(\btheta\big) \pconverge \Sigma_{xx}$, and  by the Continuous Mapping Theorem \citep[CMT;][]{LehmannRomano:2005} $V\big(\hbtheta\big) \pconverge \Sigma^\rrop$. Thus the  SR estimator $\hF^\sre_{\bY\to\bX}(\btheta) = F_{\bY\to\bX}\big(\hbtheta\big)$ is consistent.

\section{Asymptotic null distribution for single-regression GC estimators} \label{sec:asymdist}

As regards statistical inference, LR sample statistics in general satisfy Wilks' Theorem \citep{Wilks:1938}, which implies that they are asymptotically $\chi^2(d)$-distributed under the null hypothesis as sample size $N\to\infty$, with degrees of freedom $d = q n_xn_y$, where $q$ is the model order used for the full and reduced regressions; see \secref{sec:infestmod} for further discussion.

The SR estimator $\hF^\sre_{\bY\to\bX}(\btheta)$, however, is \emph{not} a log-likelihood ratio, so Wilks' Theorem does not apply, and the asymptotic distribution under the null hypothesis \eqref{eq:H0p} has thus far remained unknown\footnote{The VAR single-regression estimator is a special case of the more general state-space GC estimator \citep{Barnett:ssgc:2015,Solo:2016}. \citet{Solo:2016} claims without proof (and, as we shall see, incorrectly), that the asymptotic distribution of this estimator will in general be $\chi^2$ under the null hypothesis. \citet{Barnett:ssgc:2015}, find through extensive simulation that the state-space estimator under the null is well-approximated by a $\Gamma$ distribution; this, we shall see, is well-explained here, at least in the VAR case.}. This is the principal subject of our study.
We shall see that, unlike Wilks' asymptotic null $\chi^2$ distribution, the sampling distribution of the single-regression estimator under the null depends explicitly on the (true) null parameters $\btheta \in \Theta_0$ themselves, raising some awkward questions regarding statistical inference, which we address in \secref{sec:statinf}.

We proceed with a technical result---essentially a multivariate $2$nd-order Delta Method \citep{LehmannRomano:2005}---on which our derivation of the asymptotic distributions for the time-domain and band-limited SR estimators hinges:

\thmgap
\begin{proposition} \label{prop:gchi2}
Let $f(\btheta)$ be a non-negative, twice-differentiable function on a smooth $r$-dimensional manifold $\Theta$ which vanishes identically on the $s$-dimensional hyperplane\footnote{If $\Theta_0$ is a general smooth submanifold of $\Theta$, then \propref{prop:gchi2} still holds around any given $\btheta \in \Theta_0$ modulo an appropriate local transformation of coordinates.} $\Theta_0 \subset \Theta$ specified by $\theta_1 = \ldots = \theta_d = 0$ with $d = r-s$. Then
{\renewcommand{\theenumi}{\alph{enumi}}
\begin{enumerate}
	\item The gradient $\nabla\!f(\btheta)$ is zero for all $\btheta \in \Theta_0$. \label{it:fdel1}
	\item Writing a subscript ``$_0$'' to denote the $d \times d$ upper-left submatrix of an $r \times r$ matrix, for $\btheta \in \Theta_0$ the Hessian  $\Hess(\btheta) = \nabla^2\!f(\btheta)$ takes the form
	\begin{equation}
		\Hess(\btheta) = \begin{bmatrix} \Hess_0(\btheta) & 0 \\ 0 & 0 \end{bmatrix}
	\end{equation}
	with $\Hess_0(\btheta)$ positive-semidefinite.
	\label{it:fdel2}
	\item For $\btheta \in \Theta_0$ let $\bvtheta_N$ be a sequence of $r$-dimensional random vectors with $\sqrt N (\bvtheta_N-\btheta) \dconverge \snormal(\bzero,\Omega)$ as $N \to \infty$; \cf~\eqref{eq:mlpdist}. Then
	\begin{equation}
		N f(\bvtheta_N) \dconverge \chi^2\big(\tfrac12 \Hess_0(\btheta),\Omega_0\big) \ \text{as} \ N \to \infty
	\end{equation}
	\label{it:fdel3}
\end{enumerate}}
\end{proposition}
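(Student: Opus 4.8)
The plan is to handle parts (a) and (b) by elementary multivariable calculus, exploiting two facts at once: that $f$ \emph{vanishes identically} on $\Theta_0$, and that --- being non-negative with $f \equiv 0$ on $\Theta_0$ --- every $\btheta \in \Theta_0$ is a \emph{global minimum} of $f$. Part (c) then follows from a stochastic second-order Taylor expansion about $\btheta$ (a multivariate $2$nd-order Delta Method), finished off by reading off the limiting quadratic form via the definition of the generalised $\chi^2$ family in \secref{sec:genchi2}. I work throughout in the given coordinates, in which $\Theta_0$ is the flat hyperplane $\theta_1 = \ldots = \theta_d = 0$; the general submanifold case reduces to this locally, as the footnote notes. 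For part (a): since $f|_{\Theta_0} \equiv 0$, the derivatives of $f$ along the directions $\theta_{d+1},\ldots,\theta_r$ spanning $\Theta_0$ vanish on $\Theta_0$; and since $f \ge 0$ with $f(\btheta) = 0$ at each $\btheta \in \Theta_0$, such a $\btheta$ is a global minimum at an interior point of the (manifold) domain, so the first-order condition forces $\nabla\!f(\btheta) = \bzero$ --- in particular the remaining ``normal'' partials $\partial f/\partial\theta_1,\ldots,\partial f/\partial\theta_d$ vanish there too. Hence $\nabla\!f$ vanishes on all of $\Theta_0$.

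For part (b): the lower-right $s \times s$ block of $\Hess(\btheta)$ vanishes on $\Theta_0$ because it coincides with the Hessian of the restriction $f|_{\Theta_0}$, which is identically zero. The off-diagonal blocks vanish because, by part (a), each partial $\btheta \mapsto \partial f/\partial\theta_\alpha(\btheta)$ with $\alpha \le d$ is identically zero on $\Theta_0$, so its tangential derivatives $\partial^2\!f/\partial\theta_\alpha\partial\theta_\beta$ ($\beta \ge d+1$) also vanish there (and symmetrically for the transposed block). Finally, $\Hess_0(\btheta)$ is positive-semidefinite because the full Hessian at an interior minimum is positive-semidefinite (the second-order necessary condition), and $\Hess_0(\btheta)$ is exactly its restriction to the first $d$ coordinates.

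For part (c): write $\bZ_N = \sqrt N(\bvtheta_N - \btheta)$, so $\bZ_N \dconverge \bZ \sim \snormal(\bzero,\Omega)$, and in particular $\bvtheta_N - \btheta \pconverge \bzero$ while $\vnorm{\bZ_N}$ is bounded in probability. A second-order Taylor expansion of $f$ about $\btheta$, using that $f(\btheta) = 0$ (hypothesis) and $\nabla\!f(\btheta) = \bzero$ (part (a)), gives
\begin{equation}
	N f(\bvtheta_N) = \tfrac12 \bZ_N^\trop \Hess(\btheta)\, \bZ_N + N R_N\,, \qquad |R_N| \le \varepsilon\,\vnorm{\bvtheta_N - \btheta}^2 \ \text{ whenever } \ \vnorm{\bvtheta_N - \btheta} < \delta_\varepsilon\,,
\end{equation}
with $\delta_\varepsilon > 0$ supplied by twice-differentiability. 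On the event $\{\vnorm{\bvtheta_N - \btheta} < \delta_\varepsilon\}$, whose probability tends to $1$, we have $|N R_N| \le \varepsilon\,\vnorm{\bZ_N}^2$; since $\vnorm{\bZ_N}^2$ is bounded in probability and $\varepsilon$ is arbitrary, $N R_N \pconverge 0$. By the Continuous Mapping Theorem $\tfrac12 \bZ_N^\trop \Hess(\btheta)\,\bZ_N \dconverge \tfrac12 \bZ^\trop \Hess(\btheta)\,\bZ$, so Slutsky's theorem gives $N f(\bvtheta_N) \dconverge \tfrac12 \bZ^\trop \Hess(\btheta)\,\bZ$. By part (b) this quadratic form involves only the first $d$ components of $\bZ$, which are jointly $\snormal(\bzero,\Omega_0)$ since $\Omega_0$ is the leading $d \times d$ submatrix of $\Omega$; hence, by the definition of the generalised $\chi^2$ family in \secref{sec:genchi2}, its law is exactly $\chi^2\big(\tfrac12\Hess_0(\btheta),\Omega_0\big)$.

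The only real obstacle I anticipate is the remainder control in part (c): converting the deterministic Taylor bound $|R_N| \le \varepsilon\,\vnorm{\cdot}^2$ into $N R_N \pconverge 0$ by combining it with the stochastic boundedness of $\bZ_N$ and the consistency $\bvtheta_N \pconverge \btheta$ --- the standard but slightly fiddly mechanism underpinning any higher-order Delta Method. Parts (a) and (b) are near-immediate once one observes that $\Theta_0$ is simultaneously a zero set of $f$ and a locus of its global minima.
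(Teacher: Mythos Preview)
Your proposal is correct and follows essentially the same route as the paper: vanishing gradient and block-structured positive-semidefinite Hessian at points of $\Theta_0$ (the paper derives these via an explicit Taylor expansion and contradiction, you via the first- and second-order necessary conditions at an interior minimum, which is the same argument in different clothing), followed by a second-order Taylor expansion and the Continuous Mapping/Slutsky machinery for part~(c). If anything, your handling of the remainder term in~(c) is slightly more explicit than the paper's, which simply invokes the CMT without spelling out why the $K(\bvtheta_N)$ term vanishes in probability.
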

\thmgap

See \apxref{sec:genxprop} for a proof.

\subsection{The time-domain SR estimator} \label{sec:gcsregdist}

We shall apply \propref{prop:gchi2} with $f(\btheta)$ given by the $F_{\bY\to\bX}(\btheta)$ of \eqref{eq:popgcsr}. Firstly, $F_{\bY\to\bX}(\btheta)$ is non-negative and vanishes on $\Theta_0$ \citep{Geweke:1982}. Below we establish that it is also twice-differentiable (in fact \emph{analytic}), so that by \propref{prop:gchi2} with $\bvtheta_N$ the ML parameter estimate $\hbtheta$ for sample size $N$, we have for any $\btheta \in \Theta_0$
\begin{equation}
	N \hF^\sre_{\bY\to\bX}(\btheta) \dconverge \chi^2\!\bracr{\tfrac12 \Hess_0(\btheta),\pcmat_0(\btheta)} \ \text{as} \ N \to \infty \label{eq:gcsregdist}
\end{equation}
where $\Hess(\btheta)$ is the Hessian of $F_{\bY\to\bX}(\btheta)$ and $\Omega(\btheta)$ the inverse Fisher information matrix for the VAR($p$) model \eqref{eq:varp}  evaluated at $\btheta$. Here the ``$_0$'' subscript denotes a submatrix corresponding to the null-hypothesis variable indices $x = \{1,\ldots,n_x\}$, $y = \{n_x+1,\ldots,n\}$.

To calculate the generalised $\chi^2$ parameters, we thus require firstly the null submatrix $\Omega_0(\btheta)$ of the inverse Fisher information matrix  $\Omega(\btheta)$ for a VAR model specified by $\btheta = (A,\Sigma)$; this is a standard result. Let $\bGamma$ be the autocovariance matrix of \eqref{eq:bGamma}. Considering multi-indices $[k,ij]$ for the regression coefficients $A_{k,ij}$ (so that $k$ indexes lags and $i,j$ variables), the entries for the inverse Fisher information matrix corresponding to the $A_{k,ij}$ are given by \citep{Hamilton:1994,Lutkepohl:1993}
\begin{equation}
	\Omega(\btheta)_{[k,ij][k',i'j']} = \Sigma_{ii'} \IGamma_{kk',jj'} \label{eq:Omgea0}
\end{equation}
where $\IGamma_{kk',jj'}$ denotes the $jj'$ entry of the $kk'$-block of $\iGamma$; or, expressed as a Kronecker product:
\begin{equation}
	 \text{autoregression coefficients block of } \Omega(\btheta)\ = \Sigma \otimes \iGamma
\end{equation}
$\Omega_0(\btheta)$ is then given by the submatrix of \eqref{eq:Omgea0} with $i,i' \in x$ and $j,j' \in y$, which we write as
\begin{equation}
	\Omega_0(\btheta) = \Sigma_{xx} \otimes \bracs{\iGamma}_{yy} \label{eq:finfo0}
\end{equation}

Secondly, to calculate the null Hessian $\Hess_0(\btheta)$, we require an expression for the function $\Sigr(\btheta)$ of \eqref{eq:sigfun}. This we accomplish via the state-space formalism introduced in \citet{Barnett:ssgc:2015}. In \apxref{sec:varpss} we show that
\begin{equation}
	V(\btheta) = A_{xy} \Pi A_{xy}^\trop + \Sigma_{xx} \label{eq:sigred}
\end{equation}
where the $pn_y \times pn_y$ symmetric matrix $\Pi$ is the solution of the discrete algebraic Riccati equation (DARE)
\begin{equation}
	\Pi = \bA_{yy} \Pi \bA_{yy}^\trop + \bSigma_{yy} - \bracr{\bA_{yy} \Pi A_{xy}^\trop + \bSigma_{yx}} \bracr{A_{xy} \Pi A_{xy}^\trop + \Sigma_{xx}}^{-1} \bracr{A_{xy}\Pi \bA_{yy}^\trop + \bSigma_{yx}^\trop} \label{eq:vardare}
\end{equation}
with
\begin{equation}
	\bA_{yy} =
	\begin{bmatrix}
		A_{1,yy} & A_{2,yy} & \ldots & A_{p-1,yy} & A_{p,yy} \\
		I   & 0   & \ldots & 0       & 0   \\
		0   & I   & \ldots & 0       & 0   \\
		\vdots & \vdots & \ddots & \vdots & \vdots   \\
		0   & 0   & \ldots & I       & 0
	\end{bmatrix} \qquad\quad
	A_{xy} =
	\begin{bmatrix}
		A_{1,xy} & A_{2,xy} & \ldots & A_{p-1,xy} & A_{p,xy}
	\end{bmatrix}
\end{equation}
and
\begin{equation}
	\bSigma_{yy} =
	\begin{bmatrix}
		\Sigma_{yy} & 0 & \ldots & 0 \\
		0      & 0 & \ldots & 0 \\
		\vdots & \vdots & \ddots & \vdots \\
		0      & 0 & \ldots & 0
	\end{bmatrix} \qquad\quad
	\bSigma_{yx} =
	\begin{bmatrix}
		\Sigma_{yx} \\ 0 \\ \vdots \\ 0
	\end{bmatrix}
\end{equation}

It is not hard to see that, by construction, $F_{\bY\to\bX}(\btheta) = \log|V(\btheta)| - \log|\Sigma_{xx}|$ is an analytic function of $\btheta$: calculation of $V(\btheta)$ via \eqref{eq:vardare} and \eqref{eq:sigred} only involves algebraic operations (solution of multivariate polynomial equations), and we know $V(\btheta)$ to be positive-definite, so that $|V(\btheta)| > 0\;\forall\btheta$ and $\log|V(\btheta)|$ is thus analytic.

To calculate $\Hess_0(\btheta)$ we require derivatives up to $2$nd order of $V(\btheta)$, as defined implicitly through \eqref{eq:sigred} and \eqref{eq:vardare}, with respect to the null-hypothesis parameters (that is, with respect to $A_{k,ij}$ for $i \in x$, $j \in y$), evaluated for $\btheta \in \Theta_0$. From \eqref{eq:sigred} we may calculate:
\begin{equation}
	\frac{\partial V_{ii'}}{\partial A_{k,uv}}
	 = \delta_{ui} \bracs{\Pi A_{xy}^\trop}_{k,vi'}
	 + \delta_{ui'} \bracs{A_{xy} \Pi}_{k,iv}
	 + \bracs{A_{xy} \frac{\partial \Pi}{\partial A_{k,uv}} A_{xy}^\trop}_{ii'} \label{eq:d1V}
\end{equation}
where indices $i,i',u,u' \in x$, indices $j,j',v,v' \in y$ and $k,k' = 1,\ldots,p$. Since $A_{xy}$ vanishes under the null hypothesis, we have
\begin{equation}
	\left.\frac{\partial V_{ii'}}{\partial A_{k,uv}}\right|_{\btheta \in \Theta_0} = 0 \label{eq:d1V0}
\end{equation}
and from \eqref{eq:d1V} we find
\begin{equation}
	\left.\frac{\partial^2 V_{ii'}}{\partial A_{k,uv} \partial A_{k',u'v'}}\right|_{\btheta \in \Theta_0} = \bracs{\delta_{ui} \delta_{u'i'} + \delta_{ui'} \delta_{u'i}} \Pi_{kk',vv'} \label{eq:d2V0}
\end{equation}
We see then that $\Pi$ is required only on the null space $A_{xy} = 0$, in which case the DARE \eqref{eq:vardare} becomes a DLYAP equation:
\begin{equation}
	\Pi  = \bA_{yy} \Pi \bA_{yy}^\trop + \bSigma_{yy} - \bSigma_{yx} [\Sigma_{xx}]^{-1} \bSigma_{yx}^\trop \label{eq:varlyap}
\end{equation}
We may now calculate the required Hessian. For null parameters $\theta_\alpha,\theta_\beta$, from the definition \eqref{eq:popgcsr} and using \eqref{eq:d1V0} we may calculate
\begin{equation}
	\left.\frac{\partial^2 F_{\bY\to\bX}}{\partial\theta_\alpha \partial\theta_\beta}\right|_{\btheta \in \Theta_0} = \left.\frac{\partial^2 \log|V|}{\partial\theta_\alpha \partial\theta_\beta}\right|_{\btheta \in \Theta_0} = \trace{[\Sigma_{xx}]^{-1} \left.\frac{\partial^2 V}{\partial\theta_\alpha \partial\theta_\beta}\right|_{\btheta \in \Theta_0}} \label{eq:d2FYX}
\end{equation}
\eqref{eq:d2V0} then yields
\begin{equation}
	[\Hess_0(\btheta)]_{[k,uv],[k',u'v']} = 2\bracs{[\Sigma_{xx}]^{-1}}_{uu'} \Pi_{kk',vv'}
\end{equation}
or
\begin{equation}
	\tfrac12 \Hess_0(\btheta) = [\Sigma_{xx}]^{-1} \otimes \Pi \label{eq:2hess0}
\end{equation}

We note that in \eqref{eq:varlyap},
\begin{equation}
	\bSigma_{yy} - \bSigma_{yx} [\Sigma_{xx}]^{-1} \bSigma_{yx}^\trop = \bSigma_{yy|x} =
	\begin{bmatrix}
		\Sigma_{yy|x} & 0 & \ldots & 0 \\
		0      & 0 & \ldots & 0 \\
		\vdots & \vdots & \ddots & \vdots \\
		0      & 0 & \ldots & 0
	\end{bmatrix}
\end{equation}
where $\Sigma_{yy|x} - \Sigma_{yx} [\Sigma_{xx}]^{-1} \Sigma_{xy}$ is a partial covariance matrix. Thus \eqref{eq:varlyap} [\cf~\eqref{eq:gamlyap}] specifies the autocovariance matrix [\cf~\eqref{eq:bGamma}] for a notional $n_y$-dimensional VAR($p$) model with parameters $(A_{yy},\Sigma_{yy|x})$. Accordingly, we write $\Pi$ as $\bGamma_{yy|x}$ from now on, so that \eqref{eq:2hess0} becomes $\tfrac12 \Hess_0(\btheta) = [\Sigma_{xx}]^{-1} \otimes \bGamma_{yy|x}$, and from \eqref{eq:gcsregdist} and \eqref{eq:finfo0} it follows that
\begin{equation}
	N\hF_{\bY\to\bX}^\sre(\btheta) \dconverge \chi^2\bracr{[\Sigma_{xx}]^{-1} \otimes \bGamma_{yy|x}, \Sigma_{xx} \otimes\IGamma_{yy}} \quad\text{as } N \to \infty
\end{equation}
But from the transformation invariance $\chi^2(C A C^\trop,B) = \chi^2(A,C^\trop B C)$ and the mixed-product property of the Kronecker product, we may verify that the $\Sigma_{xx}$ terms cancel ($\Sigma_{xx}$ is positive-definite, and thus has an invertible Cholesky decomposition). We thus state our first principal result as

\thmgap
\begin{theorem} \label{thm:gcsregd}
The asymptotic distribution of the single-regression Granger causality estimator under the null hypothesis $\btheta \in \Theta_0$ (\ie, $A_{k,xy} = 0\, \forall k$) is given by
\begin{equation}
	N\hF^\sre_{\bY\to\bX}(\btheta) \dconverge \chi^2\!\bracr{\bI_{xx} \otimes \bGamma_{yy|x}, \bI_{xx} \otimes \IGamma_{yy}} \quad\text{as } N \to \infty \label{eq:gcsregd}
\end{equation}
where $\bI_{xx}$ is the $n_x \times n_x$ identity matrix, and  $\bGamma$, $\bGamma_{yy|x}$ satisfy the respective DLYAP equations
\begin{subequations}
\begin{align}
	\bGamma -\bA \bGamma \bA^\trop &= \bSigma \label{eq:bGDLYAP} \\
	\bGamma_{yy|x} -\bA_{yy} \bGamma_{yy|x} \bA_{yy}^\trop &= \bSigma_{yy|x} \label{eq:bGyy|xDLYAP}
\end{align} \label{eq:bGDLYAPs}%
\end{subequations}
or, equivalently, the corresponding Yule-Walker equations \eqref{eq:yw}.
\hfill$\blacksquare$
\end{theorem}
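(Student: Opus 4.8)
The plan is to check that $f(\btheta) = F_{\bY\to\bX}(\btheta)$ of \eqref{eq:popgcsr} satisfies the hypotheses of \propref{prop:gchi2}, apply its distributional conclusion with $\bvtheta_N$ the ML estimate $\hbtheta$, and then make the two ``null'' submatrices that appear explicit. Non-negativity of $F_{\bY\to\bX}$ and its vanishing on $\Theta_0$ are due to \citet{Geweke:1982}; the one substantive analytic point is twice-differentiability (in fact analyticity). I would get this from the state-space construction of \apxref{sec:varpss}: $F_{\bY\to\bX}(\btheta) = \log|V(\btheta)| - \log|\Sigma_{xx}|$ with $V(\btheta) = A_{xy}\bP A_{xy}^\trop + \Sigma_{xx}$ and $\bP$ the (stabilising) solution of the DARE \eqref{eq:vardare}. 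Since computing $V(\btheta)$ via \eqref{eq:vardare} and \eqref{eq:sigred} involves only algebraic operations and $V(\btheta)$ is positive-definite for every $\btheta$ (so $|V(\btheta)| > 0$), $\log|V(\btheta)|$ --- hence $F_{\bY\to\bX}(\btheta)$ --- is analytic on $\Theta$. Applying \propref{prop:gchi2} with $\bvtheta_N = \hbtheta$, which by \eqref{eq:mlpdist} satisfies $\sqrt N(\hbtheta - \btheta) \dconverge \snormal(\bzero,\Omega(\btheta))$, its third clause then gives $N\hF^\sre_{\bY\to\bX}(\btheta) \dconverge \chi^2\bracr{\tfrac12\Hess_0(\btheta),\Omega_0(\btheta)}$ as in \eqref{eq:gcsregdist}, the ``$_0$'' submatrices being over the $d = pn_xn_y$ null indices $[k,ij]$ with $i \in x$, $j \in y$.

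Next I would evaluate those two submatrices. For $\Omega_0(\btheta)$ I would quote the standard VAR Fisher-information result, whose autoregression-coefficient block is $\Sigma \otimes \iGamma$ (with $\iGamma = \bGamma^{-1}$) \citep{Hamilton:1994,Lutkepohl:1993}; restricting to $i,i' \in x$ and $j,j' \in y$ picks out $\Omega_0(\btheta) = \Sigma_{xx} \otimes \IGamma_{yy}$ as in \eqref{eq:finfo0}. For $\Hess_0(\btheta)$ I would differentiate \eqref{eq:sigred} twice in the null directions $A_{k,uv}$ ($u \in x$, $v \in y$): the first derivative \eqref{eq:d1V} vanishes at $\btheta \in \Theta_0$ because $A_{xy} = 0$ there, so in the second derivative the cross terms carrying a lone $\partial\bP$ drop out and only \eqref{eq:d2V0} survives, whence $\bP$ is needed only on the null set $A_{xy}=0$, where the DARE \eqref{eq:vardare} degenerates to the DLYAP \eqref{eq:varlyap}. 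Substituting into the log-det identity $\partial^2\log|V|/\partial\theta_\alpha\partial\theta_\beta = \trace\!\bracr{[\Sigma_{xx}]^{-1}\,\partial^2 V/\partial\theta_\alpha\partial\theta_\beta}$ --- which holds on $\Theta_0$ precisely because of \eqref{eq:d1V0} --- yields the clean Kronecker form $\tfrac12\Hess_0(\btheta) = [\Sigma_{xx}]^{-1} \otimes \bP$ of \eqref{eq:2hess0}. Recognising $\bSigma_{yy} - \bSigma_{yx}[\Sigma_{xx}]^{-1}\bSigma_{yx}^\trop = \bSigma_{yy|x}$ recasts \eqref{eq:varlyap} as the autocovariance Lyapunov equation \eqref{eq:bGyy|xDLYAP} for the notional VAR($p$) with parameters $(A_{yy},\Sigma_{yy|x})$, so $\bP = \bGamma_{yy|x}$.

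Finally I would substitute both submatrices into \eqref{eq:gcsregdist} to obtain $\chi^2\bracr{[\Sigma_{xx}]^{-1}\otimes\bGamma_{yy|x},\,\Sigma_{xx}\otimes\IGamma_{yy}}$ and apply the transformation invariance $\chi^2(CAC^\trop,B) = \chi^2(A,C^\trop B C)$ from \secref{sec:genchi2} with $C$ the Kronecker product of the transpose of a Cholesky factor of $\Sigma_{xx}$ --- invertible, since $\Sigma_{xx}$ is positive-definite --- with an identity block: by the mixed-product property $(P\otimes Q)(R\otimes S) = PR\otimes QS$ the $\Sigma_{xx}$ factors cancel in both slots, leaving $\chi^2\bracr{\bI_{xx}\otimes\bGamma_{yy|x},\,\bI_{xx}\otimes\IGamma_{yy}}$, which is \eqref{eq:gcsregd}; the DLYAP and Yule--Walker characterisations of $\bGamma$ and $\bGamma_{yy|x}$ are then just \eqref{eq:gamlyap}, \eqref{eq:varlyap} and \eqref{eq:yw}. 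Granting \propref{prop:gchi2}, the step I expect to need the most care is the implicit double differentiation of $V(\btheta)$ through the Riccati equation: one must verify that, once $A_{xy} = 0$ is imposed, every term involving a first derivative of $\bP$ really does vanish and only $\partial^2\bP$ survives --- with $\partial^2\bP$ itself governed by a genuine Lyapunov equation --- so that the Hessian collapses to the stated Kronecker product rather than something less tractable; the closing $\Sigma_{xx}$-cancellation is then a short algebraic flourish.
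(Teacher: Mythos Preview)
Your proposal is correct and follows essentially the same route as the paper's own argument, step for step. One small correction to your closing remark about where care is needed: on the null set it is not $\partial^2\bP$ that survives in the Hessian --- the term $A_{xy}(\partial^2\bP)A_{xy}^\trop$ still carries two factors of $A_{xy}$ and therefore also vanishes --- so what remains is simply $\bP$ itself (undifferentiated), exactly as in \eqref{eq:d2V0}; no implicit differentiation of the Riccati solution is required, and the only place $\bP$ enters is through its null-space value, which is why the DARE collapses cleanly to the DLYAP \eqref{eq:varlyap}.
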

\thmgap

From \eqref{eq:gcsregd} we see that the limiting distribution of $N\hF^\sre_{\bY\to\bX}(\btheta)$ is the sum of $n_x$ random variables independently and identically distributed as $\chi^2\!\bracr{\bGamma_{yy|x},\IGamma_{yy}}$. Through \eqref{eq:gchi2}, this distribution may be expressed in terms of the eigenvalues of $\bI_{xx} \otimes \bracr{\IGamma_{yy}\bGamma_{yy|x}}$; these are in fact the eigenvalues $\lambda_1,\ldots,\lambda_{pn_y}$ of $\IGamma_{yy}\bGamma_{yy|x}$, where each $\lambda_i$ appears with multiplicity $n_x$. The asymptotic distribution of $N\hF^\sre_{\bY\to\bX}(\btheta)$ under the null thus takes the form of a weighted sum of $pn_y$ iid $\chi^2(n_x)$ variables:
\begin{equation}
	\lambda_1 W_1 + \ldots + \lambda_{pn_y} W_{pn_y}\,, \qquad W_i \text{ iid} \sim \chi^2(n_x)
\end{equation}
The asymptotic mean and variance of $N\hF^\sre_{\bY\to\bX}(\btheta)$ for $\btheta \in \Theta_0$ are
\begin{subequations}
\begin{align}
	\expect{N\hF^\sre_{\bY\to\bX}(\btheta)} &\to n_x\sum_{i=1}^{pn_y} \lambda_i    \label{eq:srgcmean} \\
	\var{N\hF^\sre_{\bY\to\bX}(\btheta)}    &\to 2n_x\sum_{i=1}^{pn_y} \lambda_i^2 \label{eq:srgcvar}
\end{align} \label{eq:srgcmeanvar}%
\end{subequations}
respectively, from which the $\Gamma$-approximation of the generalised $\chi^2$ distribution may be obtained, via \eqref{eqs:gc2meanvar} and \eqref{eqs:gamparms}. \figref{fig:gcsreg_dist_1} plots generalised $\chi^2$, $\Gamma$-approximation and empirical SR-estimator cumulative density functions (CDFs) for a representative null VAR model with $n_x = 3$, $n_y = 5$ and $p = 7$ for several sample sizes, illustrating asymptotic convergence with increasing sample length $N$. The $\Gamma$-approximation is barely distinguishable from the generalised $\chi^2$.
\begin{figure}
	\begin{center}
	\includegraphics{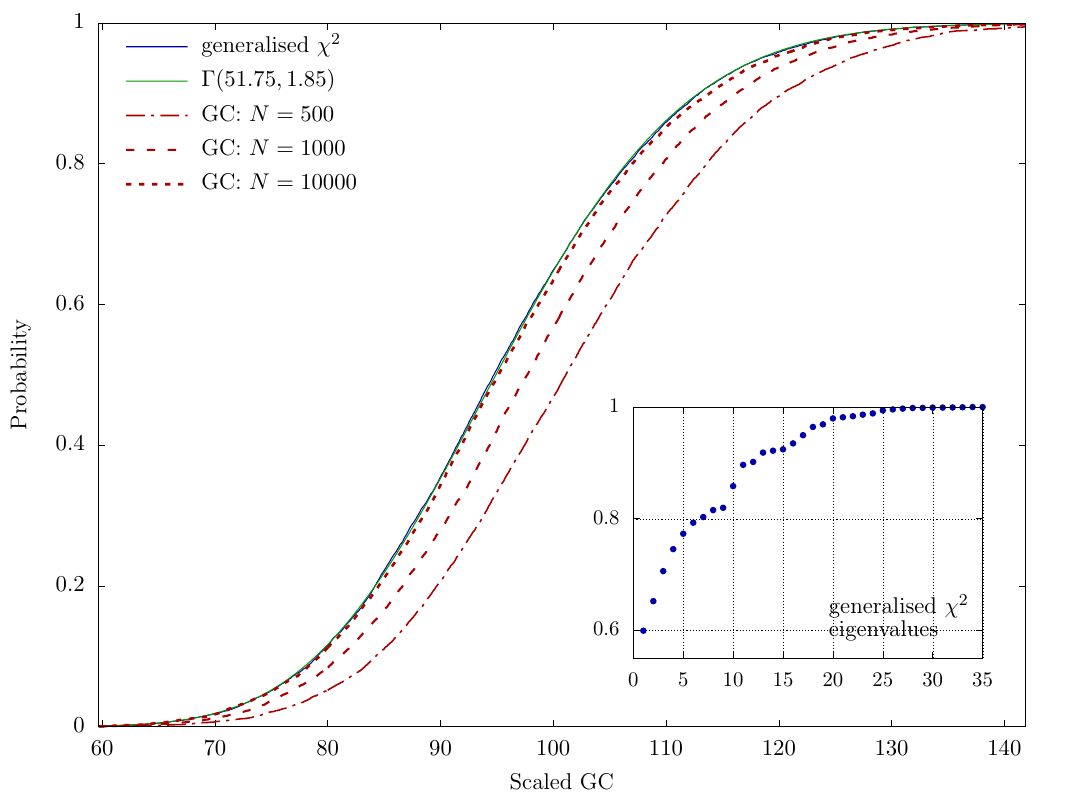}
	\end{center}
	\caption{Generalised $\chi^2$, $\Gamma$-approximation and empirical SR-estimator CDFs for a representative null VAR model with $n_x = 3$, $n_y = 5$ and $p = 7$, for several sample sizes. The null VAR model was randomly generated according to the scheme described in \apxref{sec:ranvar}, with spectral radius $\rho = 0.9$ and residuals generalised correlation $\gamma = 1$. The generalised $\chi^2$ and $\Gamma$-approximation are almost indistinguishable. Empirical GC plots were based on $10^4$ generated time series for each sample lengths $N$. Inset figure: the  $pn_y = 35$ distinct eigenvalues for the generalised $\chi^2$ distribution, sorted by size. (Each eigenvalue will be repeated $n_x = 3$ times.)} \label{fig:gcsreg_dist_1}
\end{figure}

We note that the eigenvalues $\lambda_i$ are all $>0$, since both $\IGamma_{yy}$ and $\bGamma_{yy|x}$ are guaranteed to be positive-definite (this follows from the purely nondeterministic assumption on the process $\bU_t$). From \eqref{eq:gamparmsa} and \eqref{eq:srgcmeanvar}, we find that the shape parameter of the $\Gamma$-approximation satisfies\footnote{This follows from the identities $\bracr{\sum_{i=1}^{pn_y} \lambda_i}^2 = \sum_{i=1}^{pn_y} \lambda_i^2 + \sum_{i,j=1}^{pn_y} \lambda_i \lambda_j = pn_y \sum_{i=1}^{pn_y} \lambda_i^2 - \frac12 \sum_{i,j=1}^{pn_y} (\lambda_i - \lambda_j)^2$.}
\begin{equation}
	\frac{n_x}2 \le \alpha \le \frac{pn_xn_y}2\,,
\end{equation}
and $\alpha = \frac{pn_xn_y}2 \iff$ all the $\lambda_i$ are equal $\iff$ $p = n_y = 1$, in which case the distribution of $N\hF^\sre_{\bY\to\bX}(\btheta)$ is asymptotically $\chi^2(n_x)$ scaled by $\lambda$ (\cf~\secref{sec:examp}). We also state the following conjecture, which we have tested extensively empirically, but have so far been unable to prove rigorously:

\thmgap
\begin{conjecture} \label{cnj:lambda1}
The eigenvalues of $\IGamma_{yy}\bGamma_{yy|x}$ satisfy $\lambda_i \le 1$ for $i = 1,\ldots,pn_y$.
\end{conjecture}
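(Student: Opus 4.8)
The plan is to recast the conjectured eigenvalue bound as a positive-semidefinite (Loewner) inequality between two covariance matrices, and then to obtain that inequality from the elementary principle that least-squares projection onto a larger subspace never increases the residual covariance. To begin, since $\IGamma_{yy}$ and $\bGamma_{yy|x}$ are both symmetric positive-definite (as already noted, by the purely-nondeterministic assumption), $\IGamma_{yy}\bGamma_{yy|x}$ is similar to the symmetric positive-definite matrix $\IGamma_{yy}^{1/2}\bGamma_{yy|x}\IGamma_{yy}^{1/2}$, so its eigenvalues are real and positive and ``$\lambda_i\le1$ for all $i$'' is equivalent to $\bGamma_{yy|x}\preceq\IGamma_{yy}^{-1}$; it thus suffices to prove this matrix inequality.

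The first step is to interpret $\IGamma_{yy}^{-1}$ probabilistically. Permuting the $pn$ coordinates of $\bU^{(p)}_t$ so that its $pn_x$ ``$x$''-type coordinates precede its $pn_y$ ``$y$''-type ones turns $\bGamma$ into the covariance matrix of $\big[\bX^{(p)\trop}_t\ \bY^{(p)\trop}_t\big]^\trop$, with $\bX^{(p)}_t = \big[\bX_t^\trop\ \bX_{t-1}^\trop\ldots\bX_{t-p+1}^\trop\big]^\trop$ and $\bY^{(p)}_t$ defined analogously; the block-matrix inversion formula then identifies $\IGamma_{yy}^{-1} = \bracs{\bGamma^{-1}}_{yy}^{-1}$ with the Schur complement of the $xx$-block, i.e.\ with $\var{\bY^{(p)}_t - \mathrm{proj}(\bY^{(p)}_t\mid\bX^{(p)}_t)}$, the residual covariance of the $y$-history after projecting it onto the $x$-history over the same $p$ lags.

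The second step is to interpret $\bGamma_{yy|x}$ as the residual covariance of that same $y$-history after projection onto the \emph{entire} past of $X$. This is where the null hypothesis enters: under $\btheta\in\Theta_0$ the subprocess $\bX_t$ is autonomous, so writing $\beps_{yt} = \tilde\beps_{yt} + \Sigma_{yx}\Sigma_{xx}^{-1}\beps_{xt}$ with $\tilde\beps_{yt}$ the part of $\beps_{yt}$ orthogonal to $\beps_{xt}$ (whence $\tilde\beps_{yt}$ is white noise with $\var{\tilde\beps_{yt}} = \Sigma_{yy|x}$) and substituting $\beps_{xt} = \bX_t - \sum_k A_{k,xx}\bX_{t-k}$ into the $\bY$-equation of the VAR yields $\bY_t = \mathbf{m}_t + \sum_{k=1}^p A_{k,yy}\bY_{t-k} + \tilde\beps_{yt}$, where $\mathbf{m}_t$ is a fixed linear function of $\bX_t,\bX_{t-1},\ldots,\bX_{t-p}$ and hence lies in $\mathcal{X}_t := \overline{\mathrm{span}}\{\bX_s : s\le t\}$, while $\tilde\beps_{yt}$ is uncorrelated with $\mathcal{X}_t$ and with the past of $\bY$. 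Projecting this identity onto $\mathcal{X}_t$ — and using that $\mathrm{proj}(\bY_{t-k}\mid\mathcal{X}_t) = \mathrm{proj}(\bY_{t-k}\mid\mathcal{X}_{t-k})$ for $k\ge1$, since the part of $\mathcal{X}_t$ orthogonal to $\mathcal{X}_{t-k}$ is spanned by $x$-innovations at times $>t-k$ and so is uncorrelated with $\bY_{t-k}$ — shows that the residual process $\bY_t - \hat\bY_t$, with $\hat\bY_t := \mathrm{proj}(\bY_t\mid\mathcal{X}_t)$, satisfies $\bY_t - \hat\bY_t = \sum_k A_{k,yy}(\bY_{t-k}-\hat\bY_{t-k}) + \tilde\beps_{yt}$; that is, it is the (unique, jointly stationary) VAR($p$) process with coefficient matrices $A_{yy}$ and innovation covariance $\Sigma_{yy|x}$ — stable because, under the null, a reordering of coordinates makes the companion matrix block lower-triangular with diagonal blocks $\bA_{xx},\bA_{yy}$, so $\rho(\bA_{yy})\le\rho(\bA)<1$. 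Its companion covariance is therefore exactly the solution $\bGamma_{yy|x}$ of the DLYAP equation \eqref{eq:bGyy|xDLYAP}; and since also $\mathrm{proj}(\bY_{t-j}\mid\mathcal{X}_t) = \mathrm{proj}(\bY_{t-j}\mid\mathcal{X}_{t-j})$ for $0\le j\le p-1$, stacking the components shows $\bGamma_{yy|x} = \var{\bY^{(p)}_t - \mathrm{proj}(\bY^{(p)}_t\mid\mathcal{X}_t)}$.

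The conclusion is then immediate: $\bX^{(p)}_t$ is built from $\bX_t,\ldots,\bX_{t-p+1}$, all of which lie in $\mathcal{X}_t$, so projecting $\bY^{(p)}_t$ onto the larger space $\mathcal{X}_t$ can only shrink the residual covariance, giving $\bGamma_{yy|x} = \var{\bY^{(p)}_t - \mathrm{proj}(\bY^{(p)}_t\mid\mathcal{X}_t)} \preceq \var{\bY^{(p)}_t - \mathrm{proj}(\bY^{(p)}_t\mid\bX^{(p)}_t)} = \IGamma_{yy}^{-1}$, and hence $\lambda_i\le1$ for all $i$. I expect the main obstacle to be the careful linear-prediction bookkeeping in the second step — rigorously justifying $\bGamma_{yy|x} = \var{\bY^{(p)}_t - \mathrm{proj}(\bY^{(p)}_t\mid\mathcal{X}_t)}$, in particular the reduction of projections onto the full $x$-past to projections onto its causal truncation, the correct handling of the contemporaneous residual correlation $\Sigma_{yx}$, and the invocation of the standing regularity conditions to make sense of the infinite-past projections and keep all matrices positive-definite. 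Everything else is standard block-matrix algebra and stationary linear-prediction theory.
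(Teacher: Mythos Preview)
The paper does not prove this statement---it is explicitly labelled a conjecture that the authors ``have tested extensively empirically, but have so far been unable to prove rigorously''. There is therefore no proof in the paper to compare against.

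Your argument, which applies at null parameters $\btheta\in\Theta_0$, appears to be correct and would resolve the conjecture in the case the paper actually uses. The reduction to the Loewner inequality $\bGamma_{yy|x}\preceq\IGamma_{yy}^{-1}$ is standard; the substantive step is your identification of $\bGamma_{yy|x}$, under $H_0$, as the residual covariance of $\bY^{(p)}_t$ after projection onto the full $x$-history $\mathcal X_t$, after which the comparison with the residual from projecting onto the finite-lag subspace spanned by $\bX^{(p)}_t$ is immediate. The steps you flag as delicate---the equality $\mathrm{proj}(\bY_{t-k}\mid\mathcal X_t)=\mathrm{proj}(\bY_{t-k}\mid\mathcal X_{t-k})$, orthogonality of $\tilde\beps_{yt}$ to $\mathcal X_t$ and to the past of the residual process, and stability of $\bA_{yy}$ via block-triangularity of the reordered companion matrix---all go through under the paper's standing regularity assumptions and the null condition.

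One caveat: the DLYAP equations \eqref{eq:bGDLYAPs} defining $\bGamma$ and $\bGamma_{yy|x}$ make sense for arbitrary $\btheta\in\Theta$, so the conjecture could in principle be read as a statement for general parameters. Your proof uses $H_0$ essentially: autonomy of $\bX_t$ is what makes $\beps_{xt}$ the innovation of the marginal $X$-process and hence what gives $\bGamma_{yy|x}$ its residual-covariance interpretation. Since the conjecture is invoked in the paper only to bound the asymptotic \emph{null} distribution, the null case you establish is exactly what is needed; but if a general-$\btheta$ version were intended, that would require a separate argument.
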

\thmgap

If \cnjref{cnj:lambda1} holds, then from \eqref{eq:gamparmsb} and \eqref{eq:srgcmeanvar} the scale parameter of the $\Gamma$-approximation satisfies:
\begin{equation}
	0 \le \beta \le 2
\end{equation}
Simulations reveal that spectral radius and residuals generalised correlation of null parameters $\btheta$ have a strong effect on the distribution of the eigenvalues $\lambda_i$. Spectral radius close to $1$ and large residuals correlation give rise to a larger spread of eigenvalues $< 1$, resulting in asymptotic null sampling distributions significantly different from a (non-generalised) $\chi^2$. \figref{fig:genchi2_vs_chi2} presents the distribution of single-regression estimator CDFs under random sampling of VAR models of given size, spectral radius and residuals generalised correlation (see \apxref{sec:ranvar} for the VAR sampling method), where the effects of spectral radius and residuals generalised correlation on the null distribution via the eigenvalues (inset figures) is clearly seen.
\begin{figure}
	\begin{center}
	\includegraphics{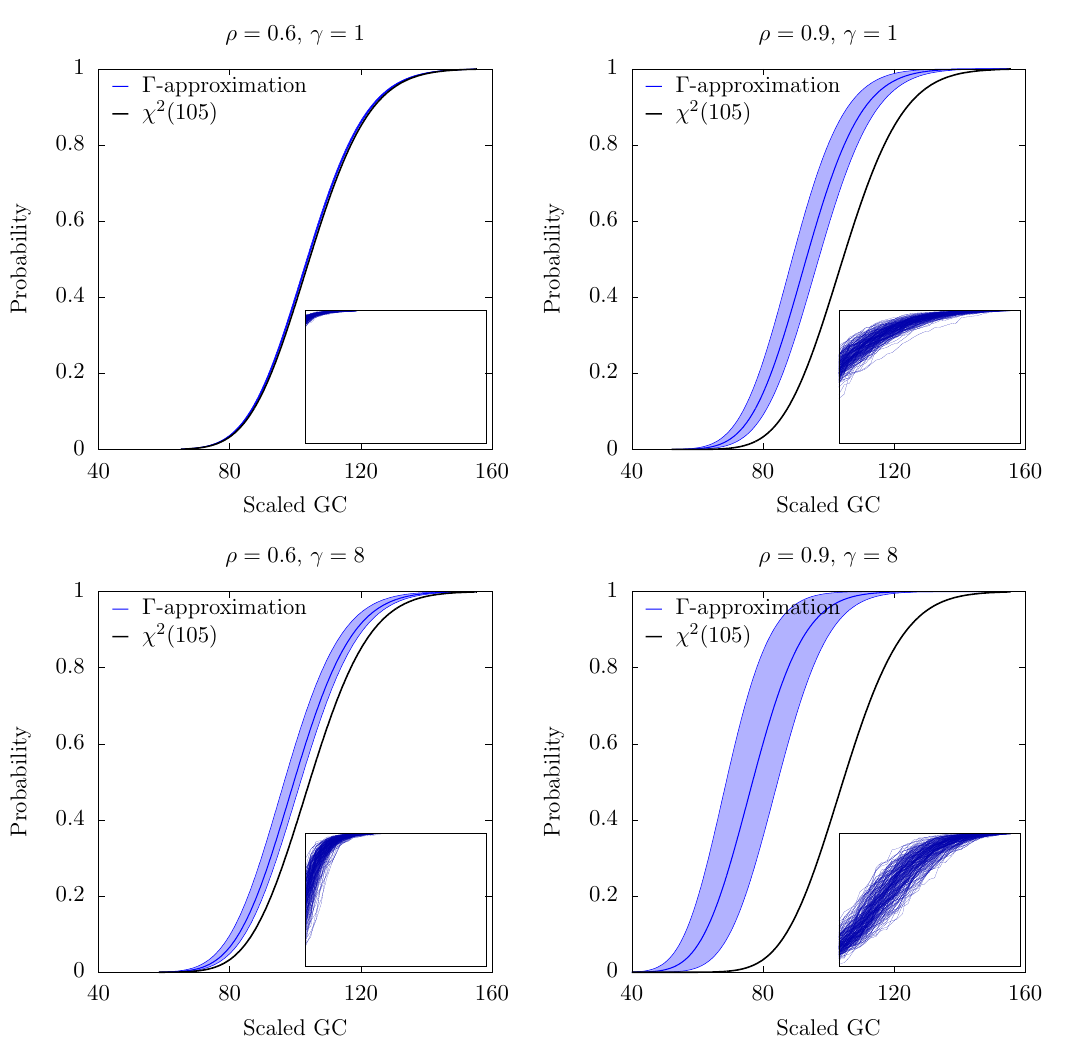}
	\end{center}
	\caption{Distribution of $\Gamma$-approximation CDFs for a random sample of $200$ null VAR models (\apxref{sec:ranvar}) with $n_x = 3$, $n_y = 5$ and $p = 7$, for a selection of spectral radii $\rho$ and residuals generalised correlation $\gamma$. At each scaled GC value, blue lines plot the mean of the $\Gamma$-approximation CDFs, while shaded areas bound upper/lower $95\%$ quantiles. Black lines plot the corresponding LR $\chi^2$ CDFs, with $pn_xn_y = 105$ degrees of freedom.  Inset figures: the  $pn_y = 35$ distinct eigenvalues sorted by size, for each of the $200$ generalised $\chi^2$ distributions ($x$ range is $1-35$, $y$ range is $0-1$).} \label{fig:genchi2_vs_chi2}
\end{figure}

Assuming \cnjref{cnj:lambda1}, an immediate consequence of \eqref{eq:srgcmeanvar} and $N\hF^\lre_{\bY\to\bX} \dconverge \chi^2(pn_xn_y)$ is that for $\btheta \in \Theta_0$
\begin{subequations}
\begin{align}
	\expect{N\hF^\sre_{\bY\to\bX}(\btheta)} &\le \expect{N\hF^\lre_{\bY\to\bX}(\btheta)} \\
	\var{N\hF^\sre_{\bY\to\bX}(\btheta)}    &\le \var{N\hF^\lre_{\bY\to\bX}(\btheta)}
\end{align}
\end{subequations}
This suggests that, more generally, the SR estimator will have smaller bias and better efficiency than the LR estimator\footnote{Strictly speaking we require the distributions of the estimators in the \emph{non}-null case to draw this conclusion; but see also \secref{sec:infestmod}.}. As is apparent in \figref{fig:genchi2_vs_chi2}, the reduction in bias and variance is stronger for spectral radius close to $1$ and high residuals correlation.

\subsection{The band-limited spectral SR estimator} \label{sec:sgc}

We now consider the asymptotic null-distribution of the band-limited spectral Granger Causality estimator $\hat{f}_{\bY\to\bX}(\frange;\btheta)$ \eqref{eq:sgcbl} which facilitates inference on the corresponding band-limited population statistic $f_{\bY\to\bX}(\frange;\btheta)$. As explained below, the appropriate null hypothesis in this case is in fact identical to the time-domain (or ``broadband'') null condition $H_0$. Inference on $f_{\bY\to\bX}(\frange;\btheta)$ is nevertheless informative beyond a time-domain test; thus while we may reject $H_0$ in the neighbourhood of $\omega_1$ at some significance level, we may fail to reject $H_0$ at the same level around a different frequency $\omega_2$, with the implication that while $H_0$ likely does not hold, Granger causality is likely to be significant around $\omega_1$ but negligible around $\omega_2$. In other words, while the time-domain test is sensitive to \textit{any} (sufficiently large) deviation from the null hypothesis---regardless of localisation in the frequency spectrum---the band-limited test is sensitive to discrepancies within the specified frequency band, but insensitive to discrepancies outside this band. Thus a significant band-limited test can justifiably be attributed to a sizeable contribution from the frequency band in question, whereas a significant time-domain test allows only to draw the less-specific conclusion that contributions from across the entire frequency range are potentially implicated in the result.

As we shall see in the following, the limiting asymptotic distribution under $H_0$ of the band-limited estimator $\hf_{\bY\to\bX}([\omega-\eps,\omega+\eps];\btheta)$ as $\eps \to 0$ exists. It should \emph{not}, however, be conflated with the asymptotic distribution of $\hf_{\bY\to\bX}(\omega;\btheta)$ under the \textit{point-frequency null hypothesis} $H_0(\omega)$ (see \secref{sec:extend} for discussion of the point-frequency case).

The point-frequency spectral GC $f_{\bY\to\bX}(\omega;\btheta)$ is non-negative, and for any $\omega$ clearly vanishes under $H_0$. We note further that by assumed stability of the VAR($p$) \eqref{eq:varp}, the inverse transfer function $\Phi(\omega)$ does not vanish, so that $\Psi(\omega)$---and hence, via \eqref{eq:specfac}, $S(\omega)$ and consequently $f_{\bY\to\bX}(\omega;\btheta)$---are analytic functions of the phase angle $\omega$ [as well as of the $\btheta = (A,\Sigma)$]. For a frequency range $\frange \subseteq [0,2\pi]$ with measure $|\frange| > 0$, then, $f_{\bY\to\bX}(\frange;\btheta)$ vanishes iff $f_{\bY\to\bX}(\omega;\btheta)$ is identically zero; \ie, precisely under the original null hypothesis $H_0$. Given a frequency range $\frange$, we thus apply \propref{prop:gchi2} to the asymptotic distribution of the band-limited spectral Granger causality estimator $\hf_{\bY\to\bX}(\frange;\btheta)$ under the (appropriate) original null hypothesis \eqref{eq:H0p} $H_0 : A_{k,xy} = 0\;\forall k$.

In the previous section we calculated the covariance $\Omega_0(\btheta) = \Sigma_{xx} \otimes\IGamma_{yy}$ of null parameters under $H_0$; it remains to calculate the null Hessian $\Hess_0(\frange;\btheta)$. Since (Lebesgue) integration and partial differentiation are linear operations, it follows that the Hessian for $f_{\bY\to\bX}(\frange;\btheta)$---on the original null space $\Theta_0$---is just
\begin{equation}
	\Hess_0(\frange;\btheta) = \frac1{|\frange|} \int_\frange \Hess_0(\omega;\btheta)\,d\omega
\end{equation}
where, for given $\omega$, $\Hess_0(\omega;\btheta)$ is the Hessian of $f_{\bY\to\bX}(\omega;\btheta)$ on $\Theta_0$ with respect to the original null parameters $A_{k,xy}$, $k = 1\ldots,p$.

Dropping the ``$\omega$'' and ``$\btheta$'' arguments for compactness where convenient, on the null space $\Theta_0$ we have  $\Phi_{xy} = 0$, and since then $\Phi$ is lower block-triangular, we have also $\Psi_{xx} = [\Phi_{xx}]^{-1}$, $\Psi_{yy} = [\Phi_{yy}]^{-1}$, and  $\Psi_{xy} = 0$. The CPSD for the process $\bX_t$ is given by
\begin{equation}
	S_{xx} = [\Psi S \Psi^*]_{xx} =  \Psi_{xx} \Sigma_{xx} \Psi_{xx}^* + \Psi_{xy} \Sigma_{yx} \Psi_{xx}^* + \Psi_{xx} \Sigma_{xy} \Psi_{xy}^* + \Psi_{xy} \Sigma_{yy} \Psi_{xy}^*
\end{equation}
On the null space $S_{xx} = \Psi_{xx} \Sigma_{xx} \Psi_{xx}^*$ so that $[S_{xx}]^{-1} = \Phi^*_{xx} [\Sigma_{xx}]^{-1} \Phi_{xx}$,

We define $T(\omega)$ to be the $n_x \times n_x$ (Hermitian) matrix
\begin{equation}
	T(\omega) = \Psi_{xy}(\omega) \Sigma_{yy|x} \Psi_{xy}(\omega)^*
\end{equation}
so that from \eqref{eq:sgc} $f_{\bY\to\bX}(\omega) = \log|S_{xx}(\omega)| - \log|S_{xx}(\omega)-T(\omega)|$. $T(\omega)$ vanishes on the null space. We may check that (for $p,q,r,s = 1,\ldots,n$, $k = 1,\ldots,p)$
\begin{equation}
	\frac{\partial \Psi_{pq}}{\partial A_{k,rs}} = \Psi_{pr} \Psi_{sq} e^{-i\omega k} \label{eq:dPsi}
\end{equation}
from which we may calculate  (with $i,i',u,u' \in x$, $j,j',v,v' \in y$)
\begin{equation}
	\frac{\partial T_{ii'}}{\partial A_{k,uv}}
	= \sum_{j,j'} [\Sigma_{yy|x}]_{jj'} \bracr{\Psi_{iu} \Psi_{vj} \bar\Psi_{i'j'} e^{-i\omega k} + \bar\Psi_{i'u} \bar\Psi_{vj'} \Psi_{ij} e^{i\omega k}} \label{eq:d1T}
\end{equation}
so that in particular, $\left.\displaystyle \frac{\partial T}{\partial \theta_\alpha}\right|_{\btheta \in \Theta_0} = 0$ for a null parameter $\theta_\alpha$, and we find [\cf~\eqref{eq:d2FYX}]:
\begin{equation}
	\left.\frac{\partial^2 f_{\bY\to\bX}}{\partial \theta_\alpha \partial \theta_\beta}\right|_{\btheta \in \Theta_0} =
	\trace{[S_{xx}]^{-1} \left.\frac{\partial^2 T}{\partial \theta_\alpha \partial \theta_\beta}\right|_{\btheta \in \Theta_0}} \label{eq:d2fYX}
\end{equation}
for null parameters $\theta_\alpha, \theta_\beta$. From \eqref{eq:d1T} and using \eqref{eq:dPsi}, we may calculate
\begin{equation}
	\left.\frac{\partial^2 T_{ii'}}{\partial A_{k,uv} \partial A_{k',u'v'}}\right|_{\btheta \in \Theta_0}
	= \Psi_{iu} \Psi^*_{u'i'} \bracs{S_{yy|x}}_{vv'} e^{-i\omega(k-k')} + \Psi_{iu'} \Psi^*_{ui'} \bracs{S_{yy|x}}_{v'v} e^{i\omega(k-k')} \label{eq:d2T}
\end{equation}
where
\begin{equation}
	S_{yy|x} = \Psi_{yy} \Sigma_{yy|x} \Psi^*_{yy} \label{eq:Syy|x}
\end{equation}
is the CPSD for a VAR($p$) model with parameters $(A_{yy},\Sigma_{yy|x})$ [\cf~\secref{sec:gc}]. From \eqref{eq:d2fYX} and \eqref{eq:d2T} we find
\begin{align}
	\left.\frac{\partial^2 f_{\bY\to\bX}}{\partial A_{k,uv} \partial A_{k',u'v'}}\right|_{\btheta \in \Theta_0}
	&= \bracs{[\Sigma_{xx}]^{-1}}_{uu'} \bracc{\bracs{S_{yy|x}}_{vv'} e^{-i\omega(k-k')} + \bracs{S_{yy|x}}_{v'v}  e^{i\omega(k-k')}} \\
	&= \bracs{[\Sigma_{xx}]^{-1}}_{uu'} \bracs{S_{yy|x} e^{-i\omega(k-k')} + \bar S_{yy|x}  e^{i\omega(k-k')}}_{vv'} \quad\text{since } S_{yy|x} \text{ is Hermitian} \\
	&= 2\bracs{[\Sigma_{xx}]^{-1}}_{uu'} \real{\bracs{\bS_{yy|x}}_{kk',vv'}}
\end{align}
where for given $\omega$ we define the $pn_y \times pn_y$ Hermitian matrix
\begin{equation}
	\bracs{\bS_{yy|x}(\omega)}_{kk',vv'} = [S_{yy|x}(\omega)]_{vv'} e^{-i\omega(k-k')} \label{eq:SSyy|x1}
\end{equation}
or
\begin{equation}
	\bS_{yy|x}(\omega) = \bZ \otimes S_{yy|x}(\omega)\,, \qquad \bZ_{kk'} = e^{-i\omega(k-k')} \label{eq:SSyy|x}
\end{equation}
We note that $\bS_{yy|x}(\omega)$ is the CPSD for the ``companion VAR($1$)'' [\cf~\eqref{eq:vartrick}] of the VAR($p$) model with parameters $(A_{yy},\Sigma_{yy|x})$, and as such may be thought of as the spectral counterpart of the autocovariance matrix $\bGamma_{yy|x}$ of \secref{sec:gcsregdist}. We thus have, for $\omega \in [0,2\pi]$:
\begin{equation}
	\Hess_0(\omega;\btheta) = [\Sigma_{xx}]^{-1} \otimes \real{\bS_{yy|x}(\omega)}
\end{equation}
so that
\begin{equation}
	\Hess_0(\frange;\btheta) = [\Sigma_{xx}]^{-1} \otimes \real{\bS_{yy|x}(\frange)}
\end{equation}
with
\begin{equation}
	\bS_{yy|x}(\frange) = \frac1{|\frange|} \int_\frange \bS_{yy|x}(\omega) \,d\omega \label{eq:SSyy|xf}
\end{equation}
We may thus state

\thmgap
\begin{theorem} \label{thm:sgcsregd}
The asymptotic distribution of the single-regression band-limited Granger causality estimator over a frequency range $\frange \subseteq [0,2\pi]$ under the null hypothesis $H_0 : A_{k,xy} = 0\, \forall k$, is given by
\begin{equation}
	N\hf_{\bY\to\bX}(\frange;\btheta) \dconverge \chi^2\!\bracr{\bI_{xx} \otimes \real{\bS_{yy|x}(\frange)}, \bI_{xx} \otimes \IGamma_{yy}} \quad\text{as } N \to \infty \label{eq:sgcsregd}
\end{equation}
where $\bS_{yy|x}(\frange)$ is given by \eqref{eq:Syy|x}, \eqref{eq:SSyy|x} and \eqref{eq:SSyy|xf}, and $\bGamma$ is as in \thrmref{thm:gcsregd}.
\hfill$\blacksquare$
\end{theorem}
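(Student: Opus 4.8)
The plan is to apply \propref{prop:gchi2} to $f(\btheta) = f_{\bY\to\bX}(\frange;\btheta)$ of \eqref{eq:sgcbl}, with the null coordinates taken to be the entries $A_{k,xy}$ ($k = 1,\ldots,p$), the remaining entries of $A$ together with $\Sigma$ furnishing the complementary coordinates so that $\Theta_0$ of \eqref{eq:H0p} is exactly the coordinate subspace $A_{k,xy} = 0$, and with $\bvtheta_N = \hbtheta$ the OLS/ML estimate of the full VAR($p$) model, so that $\Omega(\btheta)$ is its inverse Fisher information matrix and \eqref{eq:mlpdist} supplies the required $\sqrt N$-asymptotic normality. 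This follows the derivation of \thrmref{thm:gcsregd} in \secref{sec:gcsregdist} step for step: the null covariance block $\Omega_0(\btheta) = \Sigma_{xx}\otimes\IGamma_{yy}$ is unchanged, so the only genuinely new work is (i) checking that the band-limited statistic satisfies the hypotheses of \propref{prop:gchi2}, and (ii) assembling its null Hessian from the point-frequency computation already carried out above.

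First I would verify that $f_{\bY\to\bX}(\frange;\btheta)$ is non-negative, vanishes on $\Theta_0$, and is twice-differentiable in $\btheta$. Non-negativity is immediate since $f_{\bY\to\bX}(\omega;\btheta)\ge 0$ for each $\omega$ \citep{Geweke:1982} and \eqref{eq:sgcbl} averages these. Vanishing on $\Theta_0$ is inherited pointwise: when $A_{k,xy}=0$ for all $k$, $\Phi(\omega)$ is lower block-triangular, whence $\Psi_{xy}(\omega)=0$, $T(\omega)=\Psi_{xy}(\omega)\Sigma_{yy|x}\Psi_{xy}(\omega)^*=0$ and $f_{\bY\to\bX}(\omega;\btheta)\equiv 0$, so its band-average vanishes; conversely, analyticity of $f_{\bY\to\bX}(\omega;\btheta)$ in $\omega$ (by assumed stability of the VAR($p$)) forces a vanishing average over a set of positive measure to come from $f_{\bY\to\bX}(\omega;\btheta)\equiv 0$, that is $\btheta\in\Theta_0$ --- this is the earlier observation that the pertinent null condition is $H_0$ itself. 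For twice-differentiability, $\Psi(\omega)$, $S(\omega)$ via \eqref{eq:specfac}, and hence $f_{\bY\to\bX}(\omega;\btheta)$, are analytic in $\btheta$ and jointly smooth in $(\omega,\btheta)$, again by stability; on a compact $\btheta$-neighbourhood $K$ of any fixed parameter the partials of $f_{\bY\to\bX}(\omega;\btheta)$ in $\btheta$ up to second order are continuous, hence bounded, on the compact $[0,2\pi]\times K$, which legitimises differentiation under the integral sign and gives $\nabla^2\!f_{\bY\to\bX}(\frange;\btheta)=\frac1{|\frange|}\int_\frange \nabla^2\!f_{\bY\to\bX}(\omega;\btheta)\,d\omega$, as already asserted in the text. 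This Leibniz step --- the one place where the frequency integral interacts with differentiation in $\btheta$ --- is the only point I expect to require any real care; the rest is bookkeeping.

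Next I would assemble the two ingredients. The null covariance block is $\Omega_0(\btheta)=\Sigma_{xx}\otimes\IGamma_{yy}$ exactly as in \secref{sec:gcsregdist}, since neither the full model nor $\Theta_0$ has changed. For the Hessian, the point-frequency computation culminating in \eqref{eq:d2T} gives, on $\Theta_0$, $\tfrac12\Hess_0(\omega;\btheta)=[\Sigma_{xx}]^{-1}\otimes\real{\bS_{yy|x}(\omega)}$ with $\bS_{yy|x}(\omega)=\bZ\otimes S_{yy|x}(\omega)$, $\bZ_{kk'}=e^{-i\omega(k-k')}$, as in \eqref{eq:Syy|x}--\eqref{eq:SSyy|x}; since $\real{\cdot}$, $[\Sigma_{xx}]^{-1}\otimes(\cdot)$ and the normalised integral $\frac1{|\frange|}\int_\frange(\cdot)\,d\omega$ are all real-linear they commute, so $\tfrac12\Hess_0(\frange;\btheta)=[\Sigma_{xx}]^{-1}\otimes\real{\bS_{yy|x}(\frange)}$ with $\bS_{yy|x}(\frange)$ as in \eqref{eq:SSyy|xf}. (As a consistency check with \propref{prop:gchi2}, this matrix is positive-semidefinite: $\bZ=ww^*$ with $w_k=e^{-i\omega k}$ is rank-one positive-semidefinite, $S_{yy|x}(\omega)$ is a genuine CPSD hence positive-semidefinite, their Kronecker product and its $\frange$-average are therefore positive-semidefinite, and the real part of a Hermitian positive-semidefinite matrix is positive-semidefinite.)

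Finally, \propref{prop:gchi2} delivers $N\hf_{\bY\to\bX}(\frange;\btheta)\dconverge\chi^2\!\bracr{[\Sigma_{xx}]^{-1}\otimes\real{\bS_{yy|x}(\frange)},\ \Sigma_{xx}\otimes\IGamma_{yy}}$, and --- verbatim as in the derivation of \thrmref{thm:gcsregd} --- the transformation identity $\chi^2(CAC^\trop,B)=\chi^2(A,C^\trop BC)$, applied with $C$ built from the invertible Cholesky factor of the positive-definite $\Sigma_{xx}$, together with the mixed-product property of the Kronecker product, cancels the $\Sigma_{xx}$ factors and reduces this to $\chi^2\!\bracr{\bI_{xx}\otimes\real{\bS_{yy|x}(\frange)},\ \bI_{xx}\otimes\IGamma_{yy}}$, which is \eqref{eq:sgcsregd}.
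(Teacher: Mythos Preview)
Your proposal is correct and follows essentially the same route as the paper: apply \propref{prop:gchi2} with the same $\Omega_0(\btheta)=\Sigma_{xx}\otimes\IGamma_{yy}$, compute the point-frequency null Hessian $\tfrac12\Hess_0(\omega;\btheta)=[\Sigma_{xx}]^{-1}\otimes\real{\bS_{yy|x}(\omega)}$ via \eqref{eq:d2fYX}--\eqref{eq:d2T}, average over $\frange$ by linearity, and cancel $\Sigma_{xx}$ exactly as in the derivation of \thrmref{thm:gcsregd}. Your explicit justification of the Leibniz step and the positive-semidefiniteness sanity check are welcome elaborations, but the argument is otherwise the paper's own.
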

\thmgap

In particular, for $\frange = [0,2\pi]$ we have
\begin{align*}
	[\bS_{yy|x}([0,2\pi])]_{kk',vv'}
	&= \frac1{2\pi} \int_{\omega = 0}^{2\pi} [\bS_{yy|x}(\omega)]_{kk',vv'}\,d\omega \\
	&= \frac1{2\pi} \int_{\omega = 0}^{2\pi} [S_{yy|x}(\omega)]_{vv'} e^{-i\omega(k-k')} d\omega \\
	&= [\Gamma_{yy|x}]_{k'-k,vv'} \qqquad\text{by \eqref{eq:acov}} \\
	&= [\bGamma_{yy|x}]_{kk',vv'}
\end{align*}
so that
\begin{equation}
	\bS_{yy|x}([0,2\pi]) = \bGamma_{yy|x}
\end{equation}
Thus we confirm that the distribution of $\hf_{\bY\to\bX}(\frange;\btheta)$ is consistent with \thrmref{thm:gcsregd} and \eqref{sgcint} for $\frange = [0,2\pi]$. We note that the limiting asymptotic distribution of $\hf_{\bY\to\bX}([\omega-\eps,\omega+\eps];\btheta)$ as $\eps \to 0$ is obtained by simply replacing $\bS_{yy|x}(\frange)$ by $\bS_{yy|x}(\omega)$ in \eqref{eq:sgcsregd}; as mentioned, this should not be confused with the distribution of the point-frequency estimator $\hf_{\bY\to\bX}\big(\omega;\btheta\big)$ under the null hypothesis \eqref{eq:H0z}.

As before, the generalised $\chi^2$ distribution in \eqref{eq:sgcsregd} may be described in terms of the eigenvalues $\lambda_i$ of $\IGamma_{yy}\bS_{yy|x}(\frange)$. For $p > 2$, since the null space of the null hypothesis $H_0$ used to derive \eqref{eq:sgcsregd} has dimension $pn_xn_y$---whereas for any given angular frequency $\omega$ the dimension of the point-frequency null $H_0(\omega)$ (\secref{sec:extend}) is $2n_xn_y$---only $2n_y$ of the $pn_y$ eigenvalues of $\IGamma_{yy}\bS_{yy|x}(\omega)$ will be non-zero. In contrast to the time-domain case, the eigenvalues of $\IGamma_{yy}\bS_{yy|x}(\frange)$ will not necessarily be $\le 1$ (\cf~\cnjref{cnj:lambda1}), but empirically we observe that the maximum eigenvalue shrinks to $1$ as the bandwidth $|\frange|$ increases to $2\pi$.

\subsection{Worked example: the general bivariate \texorpdfstring{VAR($1$)}{VAR(1)}} \label{sec:examp}

In \apxref{sec:gcbivar1} we analyse the bivariate VAR($1$)
\begin{subequations}
\begin{align}
	X_t &= a_{xx} X_{t-1} + a_{xy} Y_{t-1} + \eps_{xt} \\
	Y_t &= a_{yx} X_{t-1} + a_{yy} Y_{t-1} + \eps_{yt}
\end{align} \label{eq:bivar1}%
\end{subequations}
with residuals covariance matrix $\expect{\beps_t \beps_t^\trop} = \twomat{\sigma_{xx}}{\sigma_{xy}}{\sigma_{yx}}{\sigma_{yy}}$, so that $\btheta = (a_{xx},a_{xy},a_{yx},a_{yy},\sigma_{xx},\sigma_{xy},\sigma_{yy})$. We calculate that the (population) Granger causality from $Y \to X$ is given by
\begin{equation}
	F_{Y \to X}(\btheta) = \log\frac{\sigr(\btheta)}{\sigma_{xx}}
\end{equation}
where the reduced residuals covariance matrix---in this case the scalar $\Sigr(\btheta) = \sigr(\btheta)$---is given by
\begin{equation}
	\sigr(\btheta) = \frac12 \bracr{P + \sqrt{P^2-Q^2}}
\end{equation}
with
\begin{equation}
	P = \sigma_{xx} (1+a_{yy}^2) - 2\sigma_{xy} a_{xy} a_{yy} + \sigma_{yy} a_{xy}^2\,, \qquad
	Q = 2(\sigma_{xx} a_{yy} - \sigma_{xy} a_{xy})
\end{equation}

We apply \thrmref{thm:gcsregd} to calculate the asymptotic distribution of the SR estimator $\hF^\sre_{Y \to X}(\btheta)$ on the null space $a_{xy} = 0$. Noting that for model order $p = 1$,  $\bGamma = \Gamma_0$, and setting $\Gamma^{-1}_0 = [\omega_{ij}]$, we have $[\bGamma^{-1}]_{yy} = [\Gamma_0^{-1}]_{yy} = \omega_{yy}$ [\apxref{sec:gcbivar1}, eq.~\eqref{eq:omyy}]. The DLYAP equation \eqref{eq:bGyy|xDLYAP} for $\Gamma_{yy|x}$ is just
\begin{equation}
	\Gamma_{yy|x} - a_{yy}^2 \Gamma_{yy|x} = \sigma_{yy|x}
\end{equation}
with
\begin{equation}
	\sigma_{yy|x} = \sigma_{yy} - \frac{\sigma_{xy}^2}{\sigma_{xx}} = \sigma_{yy}\bracr{1-\kappa^2}
\end{equation}
where $\displaystyle \kappa = \frac{\sigma_{xy}}{\sqrt{\sigma_{xx}\sigma_{yy}}}$ is the residuals correlation, so that
\begin{equation}
	\Gamma_{yy|x} = \bracr{1-\kappa^2}\frac{\sigma_{yy}}{1-a_{yy}^2}
\end{equation}
and the single eigenvalue of $[\bGamma^{-1}]_{yy}\bGamma_{yy|x}$ is just
\begin{equation}
	\lambda =  \bracr{1-\kappa^2}\frac{\sigma_{yy}\omega_{yy}}{1-a_{yy}^2}
\end{equation}
By \thrmref{thm:gcsregd} the asymptotic distribution of the single-regression estimator is thus a scaled $\chi^2(1)$:
\begin{equation}
	N \hF^\sre_{Y \to X}(\btheta) \dconverge \lambda \cdot \chi^2(1) = \Gamma\bracr{\frac{1}{2},2\lambda}
\end{equation}
(the $\Gamma$-approximation in this case is exact).

In \apxref{sec:gcbivar1} we calculate the spectral Granger causality from $Y \to X$ at $\omega \in [0,2\pi]$ as
\begin{equation}
	f_{Y \to X}(\omega;\btheta) = \log\frac{P-Q\cos\omega}{P-Q\cos\omega - a_{xy}^2 \sigma_{yy|x}}
\end{equation}
We find then that
\begin{equation}
	S_{yy|x}(\omega) = \sigma_{yy|x} |\psi_{yy}(\omega)|^2 = \sigma_{yy}\bracr{1-\kappa^2} \frac{|1-a_{xx} z|^2}{|\Delta(\omega)|^2}
\end{equation}
with $\Delta(\omega) = |\Phi(\omega)|$. On the null space, $\Delta(\omega) = (1-a_{xx}z)(1-a_{yy}z)$, so that
\begin{equation}
	S_{yy|x}(\omega) = \bracr{1-\kappa^2}\frac{\sigma_{yy}}{|1-a_{yy}z|^2} = \bracr{1-\kappa^2}\frac{\sigma_{yy}}{1-2a_{yy}\cos\omega + a_{yy}^2} \label{eq:Syy|xbivar}
\end{equation}
In this case, since the model order is $p = 1$, the null hypothesis \eqref{eq:H0z} coincides with the null hypothesis \eqref{eq:H0p} (\ie, $a_{xy} = 0$), so that from \thrmref{thm:sgcsregd} we have
\begin{equation}
	N \hf_{Y \to X}(\omega;\btheta) \dconverge \lambda(\omega) \cdot \chi^2(1)
\end{equation}
where
\begin{equation}
	\lambda(\omega) = \bracr{1-\kappa^2}\frac{\sigma_{yy}\omega_{yy}}{|1-a_{yy}z|^2} = \bracr{1-\kappa^2}\frac{\sigma_{yy}\omega_{yy}}{1-2a_{yy}\cos\omega + a_{yy}^2}
\end{equation}
The asymptotic distribution for the band-limited estimator may then be calculated as per \eqref{eq:SSyy|xf} by integrating \eqref{eq:Syy|xbivar} across the appropriate frequency range\footnote{We may calculate that $\displaystyle \int \!\frac{d\omega}{1-2a\cos\omega+a^2} = \frac2{1-a^2} \tan^{-1}\!\bracr{\frac{1+a}{1-a}\tan\frac\omega2}$.}.

\section{Statistical inference with the single-regression estimators} \label{sec:statinf}

In this section we show how our principal results, Theorems~\ref{thm:gcsregd} and \ref{thm:sgcsregd}, can be usefully applied to inference on Granger causality $F_{\bX\to\bY}$. The key difficulty in this endeavour is the fact that the asymptotic distribution of the single-regression test statistics depends explicitly on the true null parameter, i.e. on its location in the subset of the parameter space associated with the null hypothesis.

Assuming the VAR($p$) class of models, and given time-series data $\bu = \{u_1,u_2,\ldots,u_N\}$, we can estimate a ML parameter $\hbtheta(\bu)$; but we cannot simply replace the VAR($p$) true null parameter for the asymptotic distribution by the ML estimate, since $\hbtheta(\bu)$  cannot be assumed to lie in the null space $\Theta_0$. Our solution is to project the ML parameter estimate onto the null space by a continuous projection $\Pi: \Theta \to \Theta_0$ (so that in particular $\Pi\cdot\btheta = \btheta$ if $\btheta \in \Theta_0$). Here we might choose the obvious projection $\Pi\cdot\btheta = (0,\ldots,0,\theta_{d+1},\ldots,\theta_r)$. This yields a Neyman-Pearson test for the null hypothesis of zero Granger causality from $\bY\to\bX$, which we term a ``Projection Test''. Below we show that the Projection Test is ``asymptotically valid'', in the sense that the Type I error rate (incidence of false rejections of $H_0$) may be limited to a prespecified level $\alpha$, and investigate statistical power of the SR estimators in terms of the Type II error rate (incidence of failures to correctly reject $H_0$) at a given level.

For brevity, in what follows $F(\cdots)$ will refer to either the time-domain population SR statistic $F^\sre_{\bX\to\bY}(\cdots)$ \eqref{eq:popgcsr} or the band-limited population statistic $f_{\bX\to\bY}(\frange;\cdots)$ \eqref{eq:sgcbl} for some given frequency range $\frange$.

\subsection{Type I error rate} \label{sec:typeI}

Given a null parameter $\btheta \in \Theta_0$ under $H_0$ \eqref{eq:H0p} for a VAR($p$) model, let $\Phi_\btheta(\cdots)$ be the CDF of the corresponding asymptotic null Granger causality estimator; \ie, the generalised $\chi^2$ of \eqref{eq:gcsregd} (time domain) or \eqref{eq:sgcsregd} (band-limited). Given a data sample $\bu$ of size $N$, the order-$p$ Projection Test procedure at level $\alpha$ is as follows:
\begin{enumerate}
	\item Calculate the ML VAR($p$) parameter estimate $\hbtheta(\bu)$.
	\item Calculate the GC estimate $F\big(\hbtheta(\bu)\big)$.
	\item Reject the null hypothesis if
	\begin{equation}
		\Phi_{\Pi\cdot\hbtheta(\bu)}\bracr{NF\big(\hbtheta(\bu)\big)} > 1-\alpha \label{eq:stest}
	\end{equation}
\end{enumerate}
We now investigate the asymptotic Type I error rate for this test. Let $\hbtheta$ be the ML parameter estimator for a model with true parameter $\btheta \in \Theta_0$; recall that we consider $\hbtheta$ as a random variable parametrised by $\btheta$, with implicit dependence on sample size $N$, and we note again that $\hbtheta$ may take values outside of $\Theta_0$. The GC estimator is $F\big(\hbtheta\big)$ (again, a sample size-dependent random variable parametrised by $\btheta$). Given $\btheta \in \Theta_0$, the probability of a Type I error for the Projection Test is
\begin{equation}
	P_I(\btheta;\alpha) = \prob{\Phi_{\Pi\cdot\hbtheta}\bracr{NF\big(\hbtheta\big)} > 1-\alpha} \label{eq:typeI}
\end{equation}
We wish to show that $\lim_{N \to \infty} P_I(\btheta;\alpha) = \alpha$.

\thmgap
\begin{lemma} \label{thm:slemma}
	Suppose given a sequence of pairs of real-valued random variables $(X_n,Y_n)$ such that
	\begin{align}
		X_n &\dconverge X \\
		Y_n &\pconverge c
	\end{align}
	where $c$ is a constant. Then
	\begin{equation}
		\prob{X_n \le Y_n} \converge \prob{X \le c} \label{eq:slemma}
	\end{equation}
\end{lemma}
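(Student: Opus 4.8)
The plan is to treat Lemma~\ref{thm:slemma} as a ``converging together'' statement---a close relative of Slutsky's theorem---and to prove it by sandwiching the event $\{X_n \le Y_n\}$ between events in which $X_n$ is compared to the \emph{deterministic} levels $c\pm\eps$, exploiting that $Y_n$ concentrates near $c$. First I would fix $\eps > 0$ and record two elementary set inclusions: $\{X_n \le Y_n\}\cap\{Y_n \le c+\eps\} \subseteq \{X_n \le c+\eps\}$, and $\{X_n \le c-\eps\}\cap\{Y_n \ge c-\eps\} \subseteq \{X_n \le Y_n\}$. Taking probabilities and bounding the ``boundary'' events crudely gives
\begin{equation*}
\prob{X_n \le c-\eps} - \prob{Y_n < c-\eps} \;\le\; \prob{X_n \le Y_n} \;\le\; \prob{X_n \le c+\eps} + \prob{Y_n > c+\eps}.
\end{equation*}

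Next I would pass to the limit $N \to \infty$. Since $Y_n \pconverge c$, both $\prob{Y_n < c-\eps}$ and $\prob{Y_n > c+\eps}$ tend to $0$. Choosing $\eps$ from among the (all but countably many) values for which $c+\eps$ and $c-\eps$ are continuity points of the limiting CDF $F_X$, the hypothesis $X_n \dconverge X$ gives $\prob{X_n \le c\pm\eps} \to F_X(c\pm\eps)$, so that $F_X(c-\eps) \le \liminf\prob{X_n \le Y_n} \le \limsup\prob{X_n \le Y_n} \le F_X(c+\eps)$. Letting $\eps \downarrow 0$ through admissible values and invoking right-continuity of $F_X$ (whence $F_X(c+\eps)\to F_X(c) = \prob{X \le c}$) together with the left limit $F_X(c-\eps)\to\prob{X<c}$ then squeezes $\prob{X_n\le Y_n}$ into the interval $[\prob{X<c},\,\prob{X\le c}]$; since in every application in \secref{sec:statinf} the limit law $X$ is the absolutely continuous generalised $\chi^2$ of \eqref{eq:gcsregd}--\eqref{eq:sgcsregd}, $\prob{X=c}=0$ and the interval collapses to the single value $\prob{X \le c}$, giving \eqref{eq:slemma}.

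The hard part---really the only non-mechanical step---is precisely this handling of the level $c$: weak convergence controls $\prob{X_n \le t}$ only at continuity points of $F_X$, so one cannot evaluate directly at $t=c$, and an atom of $F_X$ at $c$ would leave a genuine gap between the two squeeze bounds. The statement is thus to be read with $c$ a continuity point of $F_X$, which is automatic wherever it is invoked here. An alternative that makes this transparent is a Skorokhod coupling: realise $X_n \to X$ almost surely on an auxiliary space, extract from any subsequence a further subsequence along which also $Y_n \to c$ almost surely, note that then $\mathbf 1\{X_n \le Y_n\} \to \mathbf 1\{X \le c\}$ almost surely on $\{X \ne c\}$, and finish by bounded convergence together with the ``every subsequence has a convergent sub-subsequence'' principle. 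I would present the direct squeeze as the main argument and mention the coupling version only as a remark.
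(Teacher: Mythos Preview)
Your proof is correct, but it takes a different route from the paper. The paper's argument is a one-liner: by Slutsky's theorem $X_n - Y_n \dconverge X - c$, so $\prob{X_n - Y_n \le 0} \to \prob{X - c \le 0}$, which is exactly \eqref{eq:slemma}. Your approach instead sandwiches $\prob{X_n \le Y_n}$ directly between $\prob{X_n \le c-\eps}$ and $\prob{X_n \le c+\eps}$ (up to terms that vanish by $Y_n \pconverge c$), and then lets $\eps \downarrow 0$.

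What your approach buys is precisely the care you take with the continuity issue. The paper's Slutsky argument silently requires that $0$ be a continuity point of the CDF of $X-c$, i.e.\ that $\prob{X=c}=0$; without this, $X_n - Y_n \dconverge X-c$ does \emph{not} imply convergence of the CDFs at $0$. You make this hypothesis explicit and justify it for the application at hand (the limiting generalised $\chi^2$ is absolutely continuous). Conversely, the paper's route is much shorter and avoids the bookkeeping of choosing admissible $\eps$'s and taking $\liminf$/$\limsup$. Either argument is fine; if you want brevity you could replace your squeeze by the Slutsky line and simply add the remark that the conclusion uses $\prob{X=c}=0$, which holds in \secref{sec:statinf}. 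The Skorokhod-coupling variant you sketch is also correct but is overkill here.
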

\begin{proof}
	By Slutsky's Lemma \citep{LehmannRomano:2005}, we have $X_n-Y_n \dconverge X-c$. Thus $\forall\,\eps > 0$, $\exists\,n_0 \in {\mathbb N}$ such that $\forall\,n \ge n_0$ we have $\big|\prob{X_n-Y_n \le 0}- \prob{X-c \le 0}\big| < \eps$, or, equivalently $\big|\prob{X_n \le Y_n}- \prob{X \le c}\big| < \eps$, which establishes \eqref{eq:slemma}.
\end{proof}
\thmgap

\noindent Now \eqref{eq:typeI} is equivalent to
\begin{equation}
	P_I(\btheta;\alpha) = 1-\prob{NF\big(\hbtheta\big) \le \Phi^{-1}_{\Pi\cdot\hbtheta}(1-\alpha)}
\end{equation}
Since $\hbtheta$ is a consistent estimator for $\btheta$ and $\Pi$ is continuous with $\Pi\cdot\btheta = \btheta$, by the Continuous Mapping Theorem (CMT) we have $\Pi\cdot\hbtheta \pconverge \btheta$. It is not hard to verify that the inverse CDF $\Phi^{-1}_\btheta(\cdots)$ evaluated at $1-\alpha$ is continuous in the $\btheta$ argument, so that again by the CMT we have $\Phi^{-1}_{\Pi\cdot\hbtheta}(1-\alpha) \pconverge \Phi^{-1}_{\btheta}(1-\alpha)$. By \thrmref{thm:gcsregd}, $NF\big(\hbtheta\big) \dconverge Q_\btheta$, where $Q_\btheta$ is a (generalised $\chi^2$) random variable with CDF $\Phi_\btheta$. Applying \lemmref{thm:slemma} to the pair-sequence $\bracr{NF\big(\hbtheta\big), \Phi^{-1}_{\Pi\cdot\hbtheta}(1-\alpha)}$ we have
\begin{equation}
	P_I(\btheta;\alpha)
	\converge 1-\prob{Q_\btheta  \le \Phi^{-1}_\btheta(1-\alpha)} = 1-\Phi_\btheta\bracr{\Phi^{-1}_\btheta(1-\alpha)} = \alpha \label{eq:typeIvalid}
\end{equation}
as required. While the rate of convergence of $P_I(\btheta;\alpha)$ to $\alpha$ can be expected to depend on the true null parameter $\btheta$, empirically we find that the effect is small\footnote{The extent to which the choice of projection $\Pi$ may affect the rate of convergence is unclear. It is plausible that orthogonal projection with respect to the Fisher metric \citep{Amari:2016} evaluated at the null parameter may lead to faster convergence, but we have not tested this hypothesis.}.

To gain insight into comparative Type I error rates and their parameter dependencies, we performed the following experiment: at each data length $N$, we drew $1,000$ model samples $\btheta$ from a distribution $\tbtheta$ over the subspace $\Theta(\rho,\gamma) \subset \Theta$ of null VAR models of given size, for a given spectral radius $\rho$ and residuals generalised correlation $\gamma$, according to the scheme described in \apxref{sec:ranvar}. For each model $\btheta$, we generate $n = 1,000$ data sequences. For each data sequence we calculate LR and SR estimates of the Granger causality, and test the estimates against the appropriate $\chi^2$ (resp. generalised $\chi^2$) asymptotic null distribution at significance level $\alpha = 0.05$. For a given VAR model $\btheta$, let $p(\btheta)$ be the population Type I error rate associated with the given data length, estimator, inference method and significance level. Each of the $n$ statistical tests on a given model is a Bernoulli trial, yielding a binomially-distributed estimator $\hp(\btheta)$ for $p(\btheta)$; \ie, $n\hp(\btheta) \sim B\big(n,p(\btheta)\big)$:
\begin{equation}
	\prob{n\hp(\btheta) = k} = \binom nk p(\btheta)^k [1-p(\btheta)]^{n-k}\,, \qquad k = 0,\ldots,n
\end{equation}
The $\hp(\btheta)$ estimates are collated over model samples from $\tbtheta$, to yields a distribution $\hp$ for each data length $N$; explicitly,
\begin{equation}
	\cprob{n\hp = k}{\tbtheta = \btheta} = \prob{n\hp(\btheta) = k}, \qquad k = 0,\ldots,n
\end{equation}
Results for $n_x = 3$, $n_y = 5$, $p = 7$, $\gamma = 1$, $N = 2^8 - 2^{14}$ and a selection of spectral radii are presented in \figref{fig:Type_I}. We see that asymptotic convergence for the SR estimator Type I error rate is slightly better on average than for the LR estimator, especially for smaller sample lengths and spectral radius $\rho(A)$ \eqref{eq:specrad} close to $1$.
\begin{figure}
	\begin{center}
	\includegraphics{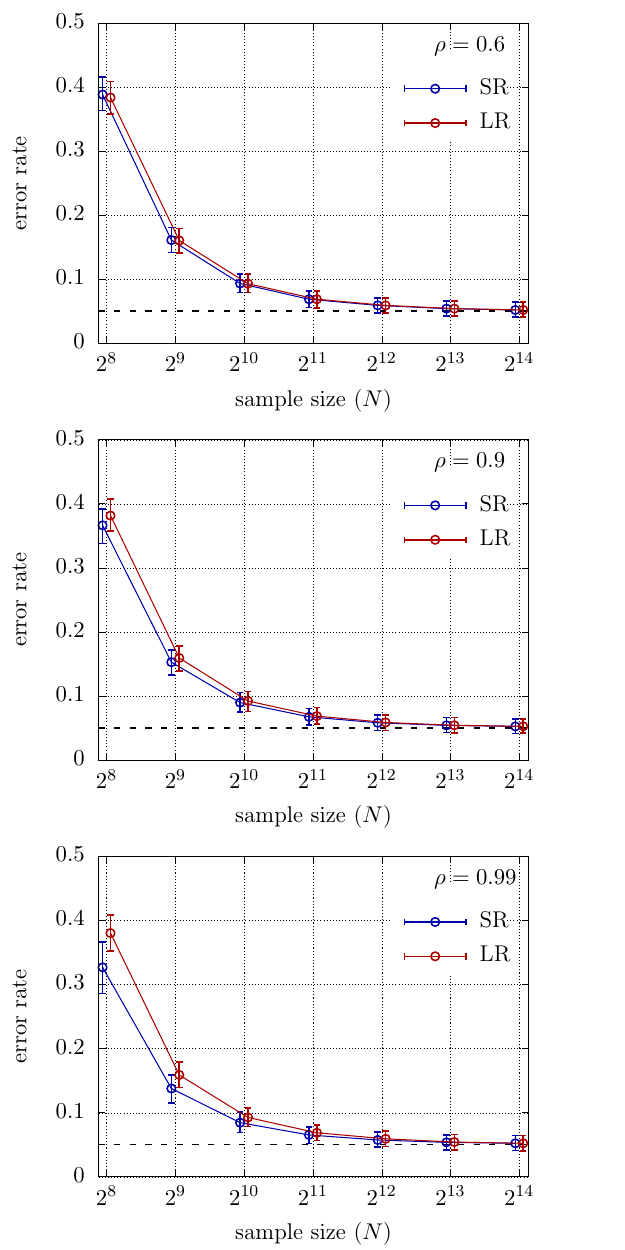}
	\end{center}
	\caption{Type I error rates at significance level $\alpha = 0.05$ for the SR vs. LR estimator for null VAR models with $n_x = 3$, $n_y = 5$, $p = 7$, $\gamma = 1$ and spectral radius $\rho$ as indicated, plotted against data length $N$ (note log scale). Error bars represent $95\%$ upper/lower quantiles; the dashed horizontal line indicates the significance level. See text (\secref{sec:typeI}) for simulation details.} \label{fig:Type_I}
\end{figure}

Note that there are two sources of variation in the experiment: estimation error of the per-model error rate, represented by the estimators $\hp(\btheta)$, and variation across the distribution $\tbtheta$ of models.
From the Law of Total Variance, we may calculate that
\begin{equation}
	\var{\hp} = \varwrt\tbtheta{p(\tbtheta)} + \frac1n \expectwrt\tbtheta{p(\tbtheta)[1-p(\tbtheta)]}
\end{equation}
We note that the second term is $\le \frac1{4n}$, so that for large enough $n$ the $\varwrt\tbtheta{p(\tbtheta)}$ term---the variance of Type II error rate with respect to model variation---dominates; \ie, for large enough $n$ the error bars are essentially due to the effect of model variation on the GC estimates. In \figref{fig:Type_I}, where the contribution to the error bars from the per-model rate estimators $\hp(\btheta)$ is small (of the order\footnote{$95\%$ quantiles correspond approximately to two standard deviations.} $\frac1{\sqrt n} \approx 0.032$), the visible dispersion is thus mostly due to model variation $\tbtheta$.

\subsection{Type II error rate (statistical power)} \label{sec:typeII}

As regards statistical power, the Type II error rate given a \emph{non}-null parameter $\btheta \in \Theta$, is given by
\begin{equation}
	P_{II}(\btheta;\alpha) = \prob{\Phi_{\Pi\cdot\hbtheta}\bracr{NF\big(\hbtheta\big)} \le 1-\alpha} \label{eq:typeII}
\end{equation}
For the likelihood-ratio statistic we have the classical result due to \citet{Wald:1943}, which yields that the scaled estimator is $\approx$ non-central $\chi^2(pn_xn_y;NF)$ in the large-sample limit, where $F$ is the population GC. The approximation only holds with reasonable accuracy for ``small'' values of $F$. In the single-regression case we have no equivalent result (but see discussion in \secref{sec:disc}). Clearly, $P_{II}(\btheta;\alpha)$ will depend strongly on the population GC value associated with specific parameters $\btheta$, but, as for the null case, will still vary within the subspace of parameters which yield a given population statistic; that is, for given $F > 0$, $P_{II}(\btheta;\alpha)$ will vary over the set $\{\btheta: F(\btheta) = F\}$. We can at least say that, roughly
\begin{align}
	P_{II}(\btheta;\alpha) &\approx 0 \quad\text{for}\quad N \gg \frac{\Phi_\btheta^{-1}(1-\alpha)}{F(\btheta)} \\[0.4em]
	P_{II}(\btheta;\alpha) &\approx 1 \quad\text{for}\quad N \ll \frac{\Phi_\btheta^{-1}(1-\alpha)}{F(\btheta)}
\end{align}

To compare statistical power (\ie, $1-$Type II error rate) between the SR and LR estimators, and to examine their dependencies on model parameters, we performed simulations on the bivariate VAR($1$) of \secref{sec:examp}. Results are displayed in \figref{fig:bivar_Type_II}.
\begin{figure}
	\begin{center}
	\includegraphics{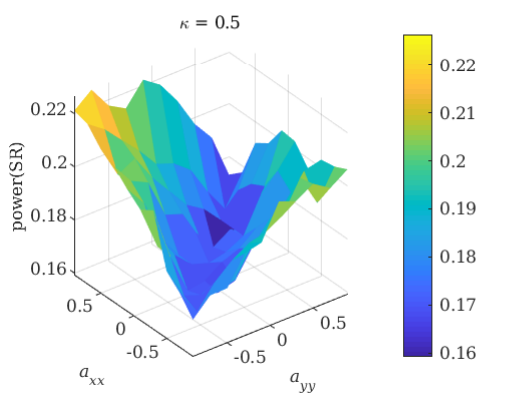}
	\includegraphics{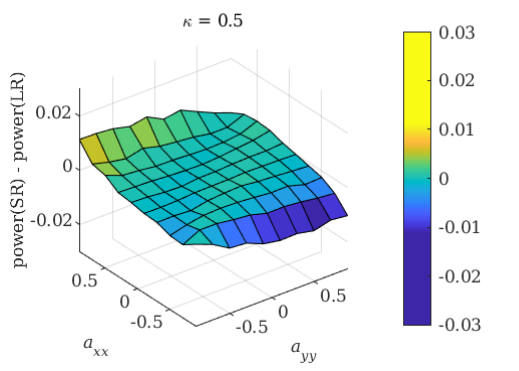} \\[1em]
	\includegraphics{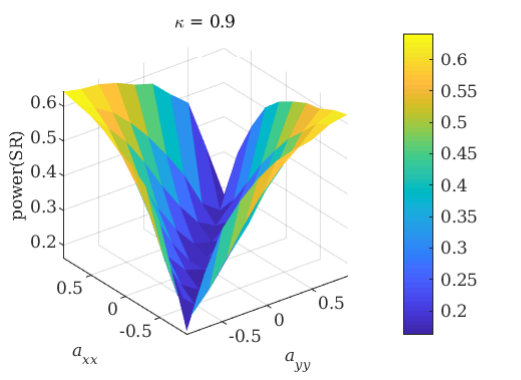}
	\includegraphics{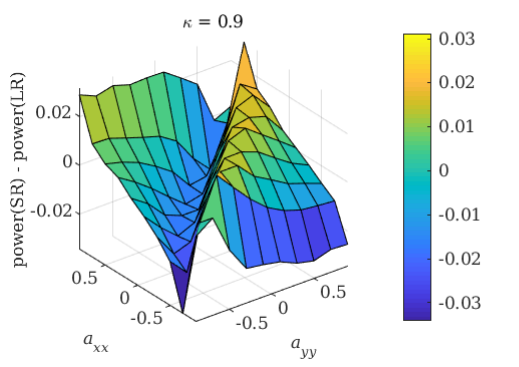}
	\end{center}
	\caption{Statistical power ($1-\text{Type II error rate}$) for the bivariate VAR($1$) (\ref{eq:bivar1}, \secref{sec:examp}), plotted against null-space parameters $a_{xx},a_{yy}$ for residuals correlation $\kappa = 0.5$ and $\kappa = 0.9$. Other parameters were $N = 10^4$, $a_{yx} = 0$, $F = 10^{-4}$. Results were based on $10^4$ simulated VAR($1$) sequences for each set of null-space parameters. Left column: statistical power for the SR estimator. Right column: difference in statistical power (\SRE~ power $-$ \LRE~power).} \label{fig:bivar_Type_II}
\end{figure}
We see (left-column figures) that for the SR estimator, statistical power is roughly symmetrical around $a_{xx} = a_{yy}$ (we note that in this case, spectral radius is $\max\{|a_{xx}|,|a_{yy}|\}$, so spectral radius ``swaps'' between $a_{xx}$ and $a_{yy}$ along this line). Statistical power is highest when $a_{xx}$ and $a_{yy}$ are roughly equal in magnitude and opposite in sign. Parameter dependency is stronger for larger residuals correlation $\kappa$. The power difference (right-column figures) between the estimators is small when residuals correlation is small; for larger residuals correlation, there is again rough (anti-)symmetry in the power difference around $a_{xx} = a_{yy}$, but the dependencies are quite complex.

To gain some insight into comparative statistical power on more general VAR models, we also repeated the experiment described in \secref{sec:typeI}, but this time generating non-null models with population GC $F = 0.007$ (see \apxref{sec:ranvar}), and testing for Type II errors. Results are presented in \figref{fig:Type_II}.
\begin{figure}
	\begin{center}
	\includegraphics{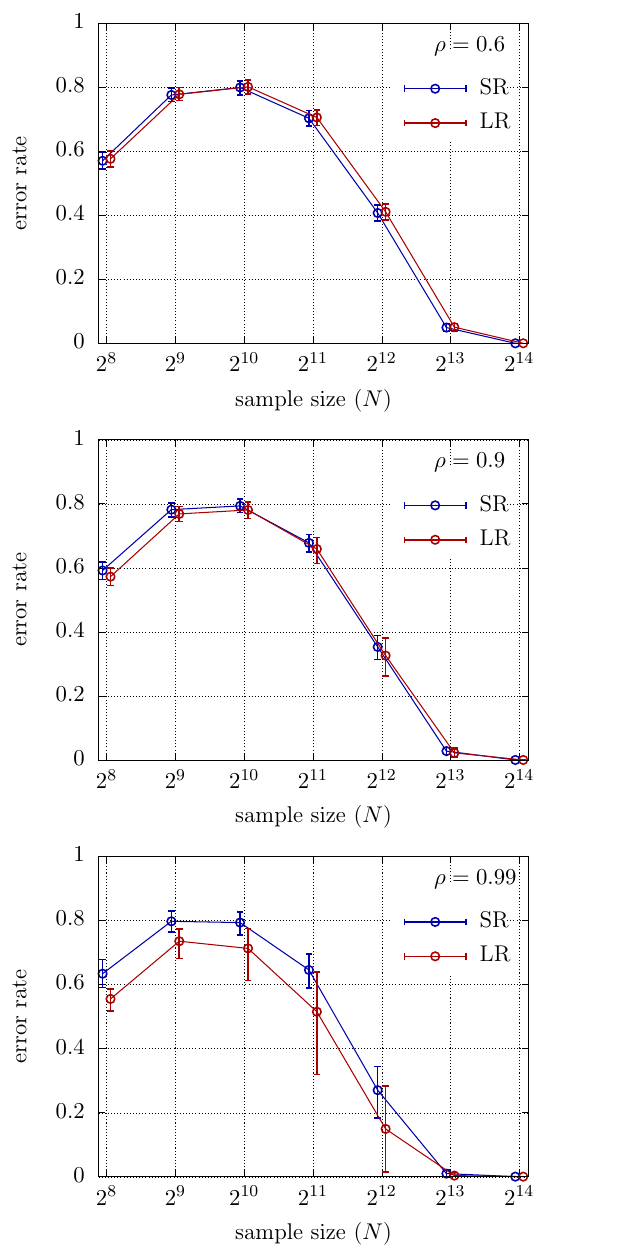}
	\end{center}
	\caption{Type II error rates ($1 - \text{statistical power}$) at $\alpha = 0.05$ for the SR vs. LR estimator for causal VAR models with $n_x = 3$, $n_y = 5$, $p = 7$,  $\gamma = 1$, population GC $F = 0.007$, and spectral radius $\rho$ as indicated; other details as in \figref{fig:Type_I}.} \label{fig:Type_II}
\end{figure}
The overall conclusion is that the SR and LR tests have similar statistical power, except when spectral radius is very close to $1$, where the LR test has somewhat higher power. Again, visible dispersion is mostly due to model variation $\tbtheta$.

\figref{fig:Type_II} raises a puzzling point: we might suspect that the distribution of $\hp$ for the LR statistic has \emph{small} variance due to $\tbtheta$, since---at least in the null case---the asymptotic distribution depends only on the problem size (degrees of freedom). This is not what our simulations show, however (see in particular $\rho = 0.99$ and $2^{10} \le N \le 2^{12}$). But note that in the non-null case this may not apply; if we are in the ``Wald regime'', where $F$ is ``small'', then similar should apply, since the Wald non-central $\chi^2$ non-null asymptotic distribution again depends only on degrees of freedom (and the actual GC value $F$). We can only conclude that for the particular parameters of the experiment, $F$ lies beyond the ambit of Wald's Theorem, so that somewhat counter-intuitively---that is, despite the known dependence of the SR sampling distribution in the null case on VAR parameters---variance of Type II error rate is generally \emph{smaller} than for the LR case.

\section{Discussion} \label{sec:disc}

In this study we have derived, for the first time, asymptotic generalised $\chi^2$ sampling distributions for the (unconditional) single-regression Granger causality in the time domain, and for the band-limited single-regression estimator in the frequency domain. We conclude with some discussion on implications, limitations and future extensions of this work.

\subsection{Unknown and infinite VAR model order} \label{sec:infestmod}

So far we have assumed that the model order of the underlying VAR model is both finite and known. However, these restrictions will generally not be met in practice. The question, then, is how statistical inference is affected when the true model order is unknown, infinite or both. Apart from some general remarks (see below), we consider here only the case of finite, but unknown VAR model order $p$.

If the model order is unknown, statistical inference becomes a two-stage process: first we obtain a parsimonious model order estimate $\hp$ using a standard selection procedure \citep{McQuarrie:1998}. We then compute a test-statistic $F^{(\hp)}$---the log-likelihood ratio or single-regression estimate---using the selected model order; that is, maximum-likelihood VAR($\hp)$ parameter estimates are computed for the full (and in the case of the LR estimator, reduced) models.

The central question is how the model order selection step should be performed so that the two-step testing procedure as a whole is (1) asymptotically valid, and (2) is as statistically powerful as possible. We consider first the implications of selecting a fixed model order $q\neq p$ for inference on a VAR($p$) process. If $q<p$, then the asymptotic statements underlying the likelihood-ratio test (i.e. Wilks' theorem) and projection test (\thrmref{thm:gcsregd} above) no longer hold; thus, for instance, in the case of the LR statistic, $N F^{(q)} \dconverge \chi^2(qn_x n_y)$ fails under the null hypothesis \eqref{eq:H0p}. On the other hand, if $q > p$ then Wilks' theorem \textit{does} apply, because (\cf~\secref{sec:gcmle}) one can always subsume a VAR model by a higher-order model. Thus, under the null hypothesis, the asymptotic statement $N F^{(q)} \dconverge \chi^2(qn_x n_y)$ is correct for fixed $q>p$, even though the model is over-specified. However, simulation  results suggest two problems associated with using too-large a model order: firstly, the rate of convergence of the test statistic decreases with $q$---potentially leading to inflated Type I errors in small samples---and secondly, statistical power is degraded.

These considerations imply that \textit{for the purposes of statistical inference}, model order $\hp$ should be selected for the \textit{full}, rather than reduced process. Because the reduced process will in general be infinite order, the reduced model order estimate will diverge to infinity as sample size increases, leading to suboptimal inference as described above. For estimation of Granger causality \emph{as an effect size}, however, \textit{efficiency} of the estimate takes precedence. For the SR estimator there is again no reason not to select for model order based on the full process. For the LR estimator, although for fixed sample size $N$ choice of full vs. reduced model order selection runs into the bias-variance trade-off identified by \citet{StokesPurdon:2017} (\secref{sec:gcmle}), empirical investigation reveals that, if model order $\hp$ is selected for the reduced process by a standard scheme, both bias and variance tend to zero as $N\to\infty$. This is explained by the observation that $\hp$ grows comparatively slowly with increasing $N$. Indeed, if the population Granger causality $F > 0$ is small enough that the large-sample non-central $\chi^2$ approximation of \citet{Wald:1943} is viable for the LR estimator, both bias and variance are $\bigO{\hp N^{-1}}$, while in general $\hp$ will grow more slowly than $\bigO N$ \citep[Chap. 5]{HandD:2012}. Further research is required here, though ultimately the issue is of little import since under the effect-size scenario there seems scant reason \emph{not} to prefer the SR estimator on grounds of efficiency; \cf~the closing remarks in \secref{sec:gcsregdist}.

Given that model order is selected based on the full process, there is still a choice to be made regarding which of the many possible selection criteria should be deployed. In this regard we note that if the model order selection criterion utilised is \textit{consistent}\footnote{Note that the popular Akaike Information Criterion (AIC) is \textit{not} consistent, whereas, \eg, Schwartz' Bayesian Information Criterion (BIC) and  Hannan amd Quinn's Information Criterion  \citep[HQIC;][]{HandQ:1979} are consistent.} then the probability of choosing the correct model order converges to 1 as $N\to\infty$. In this case the asymptotic results underlying the likelihood-ratio test and single regression projection test remain valid even if the model order has to be estimated:
\begin{align*}
	\lim_{N\to\infty}\prob{F^{(\hp)} \le \eps}
	&= \lim_{N\to\infty} \bracr{\prob{\hp = p} \cprob{F^{(\hp)} \le \eps}{\hp = p} + \prob{\hp \ne p} \cprob{F^{(\hp)} \le \eps}{\hp \ne p}} \\
	&= \lim_{N\to\infty} \prob{\hp = p} \cprob{F^{(\hp)} \le \eps}{\hp = p} + \lim_{N\to\infty} \prob{\hp \ne p} \cprob{F^{(\hp)} \le \eps}{\hp \ne p} \\
	&= \lim_{N\to\infty} \prob{\hp = p} \cprob{F^{(\hp)} \le \eps}{\hp = p} + 0 \\
	&= \lim_{N\to\infty}  \cprob{F^{(\hp)} \le \eps}{\hp = p}
\end{align*}
We have shown (\thrmref{thm:gcsregd} and \secref{sec:typeI}) that for the Projection Test the final limit is the CDF of the corresponding generalised $\chi^2$ distribution evaluated at $\eps$, while for the LR test, by Wilks' theorem it is the CDF of the corresponding $\chi^2$ distribution evaluated at $\eps$. Hence, the convergence statements justifying these tests still hold, and the corresponding two-step procedures including the model selection step are asymptotically valid.

As regards the infinite-order VAR case, establishing an asymptotically-valid scheme would seem to be more difficult, and merits further research. Preliminary experiments indicate that, at least for VARMA (equivalently state-space) processes, the (autoregressive) SR estimator with consistent model order selection yields asymptotically valid inference, in the sense that the Type I error rate converges to a specified significance level $\alpha$ (\cf~\secref{sec:typeI}), and is also more efficient than the LR estimator. This seems reasonable, given the possibility (see \secref{sec:extend} below) of extending the analysis in this study to the state-space GC estimator \citep{Barnett:ssgc:2015,Solo:2016}.

\subsection{Extensions and future research directions} \label{sec:extend}

We make the observation here that \emph{any} estimator of the form $f\big(\hbtheta\big)$, where $\hbtheta$ is the ML parameter estimator, will converge in distribution to a generalised $\chi^2$ distribution under the associated null hypothesis $\btheta \in \Theta_0$ if the statistic $f(\btheta)$ satisfies the prerequisites for \propref{prop:gchi2}. This covers a range of extensions to our results, which we list below. They vary in tractability according to the difficulty of explicit calculation of the Fisher information matrix $\Omega_0(\btheta)$ and Hessian $\Hess_0(\btheta)$ for $\btheta \in \Theta_0$.
\begin{itemize}

\item\label{it:extend:cond} \textbf{The conditional case}

Extending the time-domain \thrmref{thm:gcsregd} to the conditional case \citep{Geweke:1984} is reasonably straightforward. Given a partitioning $\bU_t = [\bX_t^\trop\;\bY_t^\trop\;\bZ_t^\trop]^\trop$ of the variables, the time-domain conditional population Granger causality statistic is given by
\begin{equation}
	F_{\bY\to\bX|\bZ}(\btheta) = \log\frac{\big|\Sigma^\rrop_{xx}\big|}{|\Sigma_{xx}|} \label{eq:cpopgc}
\end{equation}
where now the reduced regression is
\begin{align}
	\bX_t &= \sum_{k=1}^\infty A^\rrop_{k,xx}\bX_{t-k} + A^\rrop_{k,xz}\bZ_{t-k} + \beps^\rrop_{x,t} \label{eq:cvarrx} \\
	\bZ_t &= \sum_{k=1}^\infty A^\rrop_{k,zx}\bX_{t-k} + A^\rrop_{k,zz}\bZ_{t-k} + \beps^\rrop_{z,t} \label{eq:cvarrz}
\end{align}
with residuals covariance matrix $\Sigma^\rrop = \expect{\beps^\rrop_t \beps_t^{\rrop\trop}} = \begin{bmatrix} \Sigma^\rrop_{xx} & \Sigma^\rrop_{xz} \\[0.5em] \Sigma^\rrop_{zx} & \Sigma^\rrop_{zz}  \end{bmatrix}$. Again, $\Sigma^\rrop$ is a deterministic (albeit more complicated) function $V(\btheta)$ of the VAR parameters, which may again be expressed in terms of a DARE \citep{Barnett:ssgc:2015}. Although more complex, derivation of the appropriate Hessian proceeds along the same lines as in \secref{sec:gcsregdist}.

Extension to the conditional case in the frequency domain (band-limited estimator) is substantially more challenging, due to the complexity of the statistic; see \eg, \citet[Sec.~II]{Barnett:ssgc:2015}. Note that while the unconditional spectral statistic only references the full model parameters, in the conditional case both full and reduced model parameters are required.

\item\label{it:extend:spectral} \textbf{The spectral point-frequency estimator}

The null hypothesis $H_0(\omega)$ for vanishing of $f_{\bY\to\bX}(\omega;\btheta)$ \eqref{eq:sgc} at the point frequency $\omega$ is $\Psi_{xy}(\omega) = 0$, where $\Psi(\omega)$ is the transfer function \eqref{eq:trfun} for the VAR model, or \citep{Geweke:1982}
\begin{equation}
H_0(\omega) : \left\{
\begin{array}{r}
	\displaystyle \sum_{k=1}^p A_{k,xy} \cos k\omega = 0 \\[1em]
	\displaystyle \sum_{k=1}^p A_{k,xy} \sin k\omega = 0
\end{array}
\right. \label{eq:H0z}
\end{equation}
For given $\omega$, \eqref{eq:H0z} represents $2n_xn_y$ constraints on the $pn_xn_y$ regression coefficient matrices\footnote{So if $\omega \ne k\pi$ for any $k$, then if $p \le 2$ we have the original $H_0 : A_{k,xy} = 0$; \cf~\secref{sec:examp}.} $A_k$. Calculation of the point-frequency asymptotic sampling distribution is in principle approachable via a similar technique as before (\propref{prop:gchi2} must be adjusted for the case of more general linear constraints). However, we contend that in real-world applications it makes more sense in any case to consider inference on spectral Granger causality on a (possibly narrow-band) frequency range via the band-limited spectral GC  $f_{\bY\to\bX}(\frange;\btheta)$ \eqref{eq:sgcbl} as discussed in \secref{sec:sgc} rather than at point frequencies. Firstly, for a given VAR($p$), if the ``broadband'' null condition $H_0$ is not satisfied, then the point-frequency null condition $H_0(\omega)$ will only be satisfied precisely at most at a finite number of (in practice unknown) point frequencies. Secondly, real-world spectral phenomena are likely to be to some extent broadband [\eg, power spectra of neural processes \citep{MitraBokil:2008}] and/or otherwise blurred by noise.

The asymptotic sampling distribution of the point-frequency estimator is nonetheless at least of academic interest, since as far as the authors are aware, it remains unknown. Calculation of the point-frequency sampling distribution under the null hypothesis $H_0(\omega)$ is by no means intractable (at least in the unconditional case), although the computation of the Hessian is considerably complicated by the more restrictive null condition \eqref{eq:H0z}. The condition is, at least, linear, so may be readily transformed to conform to the preconditions for \propref{prop:gchi2}.

\item\label{it:extend:Fstat} \textbf{The single-regression $F$-test statistic}

Granger causality may be tested using a standard $F$-test for vanishing of the appropriate regression coefficients \citep{Hamilton:1994}. The population test statistic is
\begin{equation}
	\frac{\trace{\Sigma^\rrop}-\trace{\Sigma_{xx}}}{\trace{\Sigma_{xx}}}\,, \label{eq:popgcF}
\end{equation}
and the corresponding dual-regression estimator scaled by $d_2/d_1$ is asymptotically $F(d_1,d_2)$-distributed under the null hypothesis, where $d_1 = pn_xn_y$  and $d_2 = n_x[N-p(n_x+n_y+1)]$ are the respective degrees of freedom. Anecdotally, \citep[see, \eg,][]{Hamilton:1994,Lutkepohl:2005}, and especially for smaller sample lengths, this may yield a more powerful test than the (dual-regression) LR test. It would thus be of interest to investigate whether the single-regression $F$-statistic form \eqref{eq:popgcF} obtained by setting $\Sigma^\rrop = \Sigr(\btheta)$ as in \eqref{eq:sigfun} similarly yields a more powerful test than the single-regression Geweke form \eqref{eq:popgcsr}. This statistic satisfies the conditions for \propref{prop:gchi2}, and the  Hessian is as easily calculated as in \secref{sec:gcsregdist}.

We remark that the statistic \eqref{eq:popgcF} is less useful as a quantitative measure of causal effect, as it lacks the information-theoretic interpretation of the Geweke form \eqref{eq:popgc} \citep{Barnett:tegc:2009,Barnett:teml:2012}, nor is it invariant under as broad a set of transformations \citep{Barrett:2010}, or under invertible digital filtering \citep{Barnett:gcfilt:2011}.

\item\label{it:extend:SS} \textbf{The state-space Granger causality statistic}

The state-space GC statistic, unconditional and conditional, in time and frequency domains, was introduced in \citet{Barnett:ssgc:2015} \citep[see also][]{Solo:2016}, and is calculated from the innovations-form state-space parameters $\btheta = (A,C,K,\Sigma)$, for which reduced-model parameters $A^\rrop$ and $C^\rrop$ are known, while $K^\rrop$ and $\Sigma^\rrop$ appear as solutions of a certain DARE (\cf~\secref{sec:gcsregdist} and \apxref{sec:varpss}). The state-space approach extends GC estimation and inference from the class of finite-order VAR models to the super-class of state-space (equivalently VARMA) models. The power of the method derives from the fact that (i) unlike for the class of finite-order VAR models, the class of state-space models is closed under subprocess extraction (an essential ingredient of the Granger causality construct), and (ii) many real-world data, notably econometric and neurophysiological, have a strong moving-average component, and are thus more parsimoniously represented as VARMA rather than pure VAR models. The class of state-space models is in addition---again in contrast to the finite order VAR class---closed under sub-sampling, temporal/spatial aggregation, observation noise, and digital filtering -- all common features of real-world data acquisition and observation procedures.

The single-regression GC statistic as defined here for a VAR($p$) model is essentially the state-space GC statistic for a state-space model which describes the same stochastic process (\secref{sec:gcsregdist} and \apxref{sec:varpss}). The state-space estimator may be defined in similar terms to the VAR case, \ie, based on ML estimates for the model parameters. In comparison with the VAR case, there are two challenges to derivation of the asymptotic null sampling distribution for the state-space GC estimator via the $2$nd-order Delta Method (\propref{prop:gchi2}): (i) calculation of the Fisher information (\ie, distribution of the ML parameter estimators), and (ii) non-linearity of the null condition \citep[eq.~17]{Barnett:ssgc:2015}. While (ii) complicates calculation of the Hessian, (i) is likely to present a more formidable obstacle, due to the considerable complexity of a closed-form expression for the Fisher information matrix \citep{KleinEtal:2000}.

\end{itemize}

\subsection{The alternative hypothesis} \label{sec:H1}

We may consider two approaches to approximating the sampling distribution of the time-domain and band-limited spectral estimators, which address two distinct scenarios.

In the first scenario, we suppose given a fixed true parameter $\btheta \notin \Theta_0$, and consider the asymptotic sampling distribution of the GC statistic as sample length $N\to\infty$. In this case, the preconditions of \propref{prop:gchi2} certainly do not apply; in particular, the gradient of the statistic will not in general vanish at $\btheta$, so that a $1$st-order multivariate Delta Method is appropriate. This yields a normal distribution for the estimator, with mean equal to the population GC. If the statistic is $f\big(\hbtheta\big)$, then explicitly we have
\begin{equation}
	\sqrt N \bracs{f\big(\hbtheta\big) -f(\btheta)} \dconverge \snormal\!\bracr{0,\nabla\!f(\btheta)\cdot\Omega(\btheta)\cdot\nabla\!f(\btheta)^\trop}
\end{equation}
The variance
\begin{equation}
	\sigma^2 = \nabla\!f(\btheta)\cdot\Omega(\btheta)\cdot\nabla\!f(\btheta)^\trop
\end{equation}
may be computed from the known form of the statistic, although the gradients are harder to calculate, since (1) in the time domain the DARE does not, as in the null case (\secref{sec:gcsregdist}) collapse to a DLYAP \eqref{eq:varlyap}, while (2) in the spectral band-limited case (\secref{sec:sgc}), the transfer function $\Psi(\omega;\btheta)$ is no longer block-triangular. Gradients, furthermore, must be calculated with respect to all (rather than just null) parameters.

This scenario is more pertinent in a realistic empirical situation where, for instance, we are reasonably confident (via a Projection Test as described in \secref{sec:typeI}) that an estimated GC is significantly different from zero, and we would like to put confidence bounds on the estimate; it addresses the \textit{efficiency} of the estimator.

Under the second---and more difficult to analyse---scenario, we suppose that sample length $N$ is fixed (but large), and we consider the limiting distribution of the single-regression GC estimator as the true non-null parameter approaches the null subspace $\Theta_0$. We are now in the territory of \citet{Wald:1943}, where the asymptotics of the Taylor expansion (on which the $1$st- and $2$nd-order Delta Methods are based) become a ``balancing act'' between sample length $N$ and the distance between the true parameter and the null subspace. This is likely to be difficult to calculate; we conjecture that (by analogy with Wald's Theorem) the asymptotic distribution will be a non-central generalised $\chi^2$. This scenario is more pertinent to analysis of the \textit{power} of the statistic.

\subsection{Concluding remarks} \label{sec:conc}

In this study we analysed the hitherto unknown large-sample behaviour of single-regression Granger causality estimators. As quantitative measures of causal effect/information transfer, these estimators clearly outperform their (problematic) classical dual-regression likelihood-ratio counterparts, through consistency and reduced bias and variance. Studying their asymptotic null distributions is therefore of importance, as it admits the construction of novel and tailored hypothesis tests. We have shown the distributions to be generalised $\chi^2$ in both the time-domain and band-limited spectral case, and obtained the distributional parameters in readily-calculated form. Based on these results, we introduced a novel asymptotically-valid ``Projection Test'' of the null hypothesis of vanishing causality. This test should prove especially useful for testing Granger causality on specified frequency bands, a commonplace desideratum in various fields of application, including neuroimaging and econometric time-series analysis. We showed that the approach remains valid even if the model order is unknown, provided a consistent model order selection criterion is used. Our approach may be extended to the conditional case, and, potentially, beyond pure autoregressive modelling to causal inference based on the state-space Granger causality estimator. Finally, we outlined an approach to evaluation of the sampling distribution under the alternative hypothesis, knowledge of which would be useful for the analysis of statistical power and construction of confidence bounds. These extensions merit further detailed investigation, being highly relevant to a wide range of practical applications.

\section*{Acknowledgements} \label{sec:ack}

The authors would like to thank the Dr. Mortimer and Theresa Sackler Foundation, which supports the Sackler Centre for Consciousness Science. We are also grateful to Anil K. Seth and Michael Wibral for useful comments on this work.

\subsection*{Author contributions} \label{sec:contrib}

Authors Gutknecht and Barnett contributed equally to this work.

\vspace{4em}

\appendix

\noindent\textbf{\Large Appendices}

\section{Proof of \propref{prop:gchi2}} \label{sec:genxprop}

Let $\btheta = [\bx^\trop\; \by^\trop]^\trop$ where $x_i = \theta_i$, $i = 1,\ldots,d$ and $y_j = \theta_{d+j}$, $j = 1,\ldots,s$. Since by definition $f(0,\by) = 0\;\forall \by$, we have immediately $\nabla_\by f(0,\by) = 0\;\forall \by$. Treating $\by$ as fixed, we expand $f(\bx,\by)$ in a Taylor series around $\bx = 0$:
\begin{equation}
	f(\bx,\by) = \nabla_\bx f(0,\by) \bx + \tfrac12 \bx^\trop \nabla^2_{\bx\bx} f(0,\by) \bx + \tfrac12 \bx^\trop K(\bx,\by) \bx \label{eq:fxy}
\end{equation}
where for fixed $\by$, $K(\bx,\by)$ is a $d \times d$ matrix function of $\bx$, and $\lim_{\bx \to 0} \vnorm{K(\bx,\by)} = 0$. Now we show that since $f(\bx,\by)$ is non-negative, we must have $\nabla_\bx f(0,\by) = 0\; \forall\by$. Suppose, say, $\nabla_{x_1} f(0,\by) = -g < 0$. Setting $x_1 = \eps > 0$  [if $\nabla_{x_1} f(0,\by) > 0$ we take $x_1 = -\eps$] and  $x_2 = \ldots = x_d = 0$, \eqref{eq:fxy} yields
\begin{equation}
	f(\bx,\by) = -g \eps + \tfrac12 \bracs{\nabla^2_{x_1x_1} f(0,\by) + K_{11}(\bx,\by)} \eps^2
\end{equation}
Now since $\lim_{\eps \to 0} \vnorm{K(\bx,\by)} = 0$, we can always choose $\eps$ small enough that
\begin{equation}
	\tfrac12 \bracs{\nabla^2_{x_1x_1} f(0,\by) + K_{11}(\bx,\by)} \eps < g
\end{equation}
so that $f(\eps,0,...,0,\by) < 0$, a contradiction. Thus we have $\nabla_\bx f(0,\by) = 0\; \forall \by$, proving \ref{prop:gchi2}\ref{it:fdel1}.

From \eqref{eq:fxy} we thus have
\begin{equation}
	f(\bx,\by) = \tfrac12 \bx^\trop \nabla^2_{\bx\bx} f(0,\by) \bx + \tfrac12 \bx^\trop K(\bx,\by) \bx \label{eq:fxy1}
\end{equation}
To see that $\nabla^2_{\bx\bx} f(0,\by)$ must be positive-semidefinite, we assume the contrary. We may then find a unit $d$-dimensional vector $\bu$ such that $\bu^\trop \nabla^2_{\bx\bx} f(0,\by) \bu = -G < 0$. Setting $\bx = \eps\bu$, we may then choose $\eps$ small enough that $\bu^\trop K(\eps\bu,\by) \bu < G$, so that again $f(\bx,\by)$ is negative and we have a contradiction. Finally, $\nabla^2_{\bx\by} f(0,\by) = \nabla^2_{\by\by} f(0,\by) = 0\; \forall \by$ follows directly from \eqref{eq:fxy1}, and we have established \ref{prop:gchi2}\ref{it:fdel2}.

We now prove \ref{prop:gchi2}\ref{it:fdel3} using a $2$nd-order Delta Method \citep{LehmannRomano:2005}. Let $\btheta \in \Theta_0$. Since $f(\btheta)$ and its gradient $\nabla\!f(\btheta)$ both vanish, the Taylor expansion of $f(\bvtheta_N)$ around $\btheta$ takes the form
\begin{equation}
	f(\bvtheta_N) = \tfrac12 \bracr{\bvtheta_N - \btheta}^\trop \Hess(\btheta) \bracr{\bvtheta_N - \btheta} + \bracr{\bvtheta_N - \btheta}^\trop K(\bvtheta_N) \bracr{\bvtheta_N - \btheta}
\end{equation}
where $\Hess(\btheta) = \nabla^2 \!f(\btheta)$ is the Hessian matrix of $f$ evaluated at $\btheta$, and $\lim_{\btheta' \to \btheta} K(\btheta') = 0$. Multiplying both sides by the sample size $N$, we have
\begin{equation}
	N f(\bvtheta_N) = \tfrac12 \bracs{\sqrt N\bracr{\bvtheta_N - \btheta}}^\trop \Hess(\btheta) \bracs{\sqrt N\bracr{\bvtheta_N - \btheta}} + \bracs{\sqrt N\bracr{\bvtheta_N - \btheta}}^\trop K(\bvtheta_N) \bracs{\sqrt N\bracr{\bvtheta_N - \btheta}}
\end{equation}
But by assumption $\sqrt N (\bvtheta_N-\btheta) \dconverge \bZ$ as $N \to \infty$, where $\bZ \sim \snormal(\bzero,\Omega)$. Thus, by the CMT \citep{van2000asymptotic}, we have
\begin{equation}
	N f(\bvtheta_N) \dconverge \tfrac12 \bZ^\trop \Hess(\btheta) \bZ \label{eq:thegenchi2}
\end{equation}
as $N \to \infty$, and \ref{prop:gchi2}\ref{it:fdel3} follows immediately from \eqref{eq:thegenchi2} and \ref{prop:gchi2}\ref{it:fdel2}.

\section{State-space solution for the reduced VAR model parameters} \label{sec:varpss}

Following \citet{Barnett:ssgc:2015}, given a VAR($p$) model \eqref{eq:varp} with parameters $\btheta = (A;\Sigma)$, we create an equivalent innovations-form state-space model \citep{HandD:2012}
\begin{subequations}
\begin{align}
	\bZ_{t+1} &= \sA \bZ_t + K \beps_t \\
	\bU_t &= A \bZ_t \hspace{2pt} + \hspace{9pt}\beps_t
\end{align}%
\end{subequations}
where
\begin{equation}
	\sA =
	\begin{bmatrix}
		A_1 & A_2 & \ldots & A_{p-1} & A_p \\
		I   & 0   & \ldots & 0       & 0   \\
		0   & I   & \ldots & 0       & 0   \\
		\vdots & \vdots & \ddots & \vdots & \vdots   \\
		0   & 0   & \ldots & I       & 0
	\end{bmatrix} \qquad\quad
	A =
	\begin{bmatrix}
		A_1 & A_2 & \ldots & A_{p-1} & A_p \\
	\end{bmatrix} \qquad\quad
	K =
	\begin{bmatrix}
		I \\ 0 \\ 0 \\ \vdots \\ 0
	\end{bmatrix}
\end{equation}
$\sA$ is the $pn \times pn$ state transition matrix [the companion matrix \eqref{eq:compA} for the VAR($p$) \eqref{eq:varp}], $K$ the $pn \times n$ Kalman gain matrix, and $A$ the $n \times pn$ observation matrix. As before, we use subscript $x$ for the indices $1,\ldots,n_x$, $y$ for the indices $n_x+1,\ldots,n$, and we use an asterisk to donate ``all indices''. The subprocess $\bX_t$ then satisfies the state-space model
\begin{subequations}
\begin{align}
	\bZ_{t+1} &= \sA \bZ_t \hspace{8pt}+ K \beps_t \\
	\bX_t &= A_{x*} \bZ_t + \hspace{10pt}\beps_{x,t}
\end{align}%
\end{subequations}
This state-space model is no longer in innovations form; we can, however \cite[see][]{Barnett:ssgc:2015} derive an innovations-form state-space model for $\bX_t$ by solving the discrete-time algebraic Riccati equation (DARE)
\begin{equation}
	P = \sA P \sA^\trop + \bSigma - \big(\sA P A_{x*}^\trop + \bSigma_{*x}\big) \big(A_{x*} P A_{x*}^\trop + \Sigma_{xx}\big)^{-1} \big(\sA P A_{x*}^\trop + \bSigma_{*x}\big)^\trop \label{eq:darebig}
\end{equation}
for $P$ (a $pn \times pn$ symmetric matrix), with
\begin{equation}
	\bSigma =
	\begin{bmatrix}
		\Sigma & 0 & \ldots & 0 \\
		0      & 0 & \ldots & 0 \\
		\vdots & \vdots & \ddots & \vdots \\
		0      & 0 & \ldots & 0
	\end{bmatrix} \qquad\quad
	\bSigma_{*x} =
	\begin{bmatrix}
		\Sigma_{*x} \\ 0 \\ \vdots \\ 0
	\end{bmatrix}
\end{equation}
which are, respectively, $pn \times pn$ and $pn \times n_x$. We note that under our assumptions, a stabilising solution for \eqref{eq:darebig} exists, and is unique \citep{Solo:2016}. Then
\begin{subequations}
\begin{align}
	\bZ_{t+1} &= \sA \bZ_t \hspace{8pt}+ K^\rrop \beps^\rrop_t \\
	\bX_t &= A_{x*} \bZ_t + \hspace{15pt}\beps^\rrop_t
\end{align} \label{eq:issred}%
\end{subequations}
is in innovations form, with innovations covariance matrix and Kalman gain matrix
\begin{subequations}
\begin{align}
	\Sigma^\rrop &= A_{x*} P A_{x*}^\trop + \Sigma_{xx} \\
	K^\rrop &= \big(\sA P A_{x*}^\trop + \bSigma_{*x}\big) \big[\Sigma^\rrop\big]^{-1} \label{eq:kgred}
\end{align}%
\end{subequations}
respectively. The $\beps^\rrop_t$ in \eqref{eq:issred} are precisely the reduced residuals in \eqref{eq:varr} and $\Sigma^\rrop = \expect{\beps^\rrop_t\beps^{\rrop\trop}_t}$ implicitly defines $V(\btheta)$ \eqref{eq:sigfun} as required for the single-regression GC statistic $F^\sre_{\bY\to\bX}(\btheta)$ \eqref{eq:popgcsr}.

We may in fact confirm that
\begin{equation}
	\Sigma^\rrop = A_{xy} \Pi A_{xy}^\trop + \Sigma_{xx} \label{eq:varredsig}
\end{equation}
where the $pn_y \times pn_y$ symmetric matrix $\Pi$ is the unique stabilising solution of the lower-dimensional DARE
\begin{equation}
	\Pi = \bA_{yy} \Pi \bA_{yy}^\trop + \bSigma_{yy} - \big(\bA_{yy} \Pi A_{xy}^\trop + \bSigma_{yx}\big) \big(A_{xy} \Pi A_{xy}^\trop + \Sigma_{xx}\big)^{-1} \big(\bA_{yy} \Pi A_{xy}^\trop + \bSigma_{yx}\big)^\trop \label{eq:daresmall}
\end{equation}
with
\begin{equation}
	\bA_{yy} =
	\begin{bmatrix}
		A_{1,yy} & A_{2,yy} & \ldots & A_{p-1,yy} & A_{p,yy} \\
		I   & 0   & \ldots & 0       & 0   \\
		0   & I   & \ldots & 0       & 0   \\
		\vdots & \vdots & \ddots & \vdots & \vdots   \\
		0   & 0   & \ldots & I       & 0
	\end{bmatrix} \qquad\quad
	A_{xy} =
	\begin{bmatrix}
		A_{1,xy} & A_{2,xy} & \ldots & A_{p-1,xy} & A_{p,xy}
	\end{bmatrix}
\end{equation}
which are, respectively, $pn_y \times pn_y$ and $n_x \times pn_y$, and
\begin{equation}
	\bSigma_{yy} =
	\begin{bmatrix}
		\Sigma_{yy} & 0 & \ldots & 0 \\
		0      & 0 & \ldots & 0 \\
		\vdots & \vdots & \ddots & \vdots \\
		0      & 0 & \ldots & 0
	\end{bmatrix} \qquad\quad
	\bSigma_{yx} =
	\begin{bmatrix}
		\Sigma_{yx} \\ 0 \\ \vdots \\ 0
	\end{bmatrix} \qquad\quad
\end{equation}
respectively, $pn_y \times pn_y$ and $pn_y \times n_x$. To see this, we may verify by substitution that if
\begin{equation}
	\Pi =
	\begin{bmatrix}
		\Pi_{11} & \cdots & \Pi_{1p} \\
		\vdots &        & \vdots \\
		\Pi_{p1} & \cdots & \Pi_{pp}
	\end{bmatrix}
\end{equation}
solves the reduced-dimension DARE \eqref{eq:daresmall}, where the $\Pi_{kl}$ are $n_y \times n_y$, then
\begin{equation}
	P =
	\begin{bmatrix}
		\begin{bmatrix}	0_{n_x \times n_x} & 0_{n_x \times n_y} \\ 0_{n_y \times n_x} & \Pi_{11} \end{bmatrix} & \cdots &
		\begin{bmatrix}	0_{n_x \times n_x} & 0_{n_x \times n_y} \\ 0_{n_y \times n_x} & \Pi_{1p} \end{bmatrix} \\
		\vdots &        & \vdots \\
		\begin{bmatrix}	0_{n_x \times n_x} & 0_{n_x \times n_y} \\ 0_{n_y \times n_x} & \Pi_{p1} \end{bmatrix} & \cdots &
		\begin{bmatrix}	0_{n_x \times n_x} & 0_{n_x \times n_y} \\ 0_{n_y \times n_x} & \Pi_{pp} \end{bmatrix}
	\end{bmatrix}
\end{equation}
solves the original DARE \eqref{eq:darebig}, and that $\Sigma^\rrop$ is given by \eqref{eq:varredsig}. We may also confirm that the Kalman gain matrix \eqref{eq:kgred} for the reduced model is given by
\begin{equation}
	K^\rrop =
	\begin{bmatrix}
		I_{n_x \times n_x} \\[2pt]
		\lK^\rrop_1 \\[2pt]
		0_{n_x \times n_x} \\[2pt]
		\lK^\rrop_2 \\
		\vdots \\
		0_{n_x \times n_x} \\[2pt]
		\lK^\rrop_p
	\end{bmatrix}
\end{equation}
where
\begin{equation}
	\lK^\rrop = \big(\sA_{yy} \Pi A_{xy}^\trop + \bSigma_{yx}\big) \big[\Sigma^\rrop\big]^{-1} =
	\begin{bmatrix}
		\lK^\rrop_1 \\[2pt]
		\lK^\rrop_2 \\
		\vdots \\[2pt]
		\lK^\rrop_p
	\end{bmatrix}
\end{equation}
(the $\lK_k^\rrop$ are  $n_y \times n_x$) is the Kalman gain matrix associated with the DARE \eqref{eq:daresmall}.

\section{Granger causality for a general bivariate \texorpdfstring{VAR($1$)}{VAR(1)} process} \label{sec:gcbivar1}

Consider the bivariate VAR($1$)
\begin{subequations}
\begin{align}
	X_t &= a_{xx} X_{t-1} + a_{xy} Y_{t-1} + \eps_{xt} \\
	Y_t &= a_{yx} X_{t-1} + a_{yy} Y_{t-1} + \eps_{yt}
\end{align}%
\end{subequations}
with parameters
\begin{equation}
	A = \twomat{a_{xx}}{a_{xy}}{a_{yx}}{a_{yy}}\,, \qquad \Sigma = \expect{\beps_t \beps_t^\trop} = \twomat{\sigma_{xx}}{\sigma_{xy}}{\sigma_{yx}}{\sigma_{yy}}
\end{equation}
The transfer function is then $\Psi(\omega) = \Phi(\omega)^{-1}$ with $\Phi(\omega) = I-Az$, and we have the spectral factorisation \eqref{eq:specfac} of the CPSD $S(\omega)$ of the VAR process, which holds for $z$ on the unit circle $|z| = 1$ in the complex plane.
Setting $\Delta(\omega) = |\Phi(\omega)|$ (determinant), we have
\begin{equation}
	\Psi(\omega) = \twomat{1-a_{xx}z}{-a_{xy}z}{-a_{yx}z}{1-a_{yy}z}^{-1} = \Delta(\omega)^{-1} \twomat{1-a_{yy}z}{a_{xy}z}{a_{yx}z}{1-a_{xx}z}
\end{equation}
This leads to
\begin{equation}
	S(\omega) = |\Delta(\omega)|^{-2} \twomat{1-a_{yy}z}{a_{xy}z}{a_{yx}z}{1-a_{xx}z} \twomat{\sigma_{xx}}{\sigma_{xy}}{\sigma_{yx}}{\sigma_{yy}} \twomat{1-a_{yy}\cz}{a_{yx}\cz}{a_{xy}\cz}{1-a_{xx}\cz}
\end{equation}
on $z = 1$, where $\cz$ is the complex conjugate of $z$; here,  for a complex variable $w$, $|w|$ denotes the norm $\sqrt{w\bar w}$.

We wish to calculate the GC $F_{Y \to X}$. If $v$ is the residuals variance for the VAR representation of the subprocess $X_t$, then the GC is just $F_{Y \to X} = \log v - \log{\sigma_{xx}}$. To solve for $v$ we could use the state-space method of \citet{Barnett:ssgc:2015}, but here we use an explicit spectral factorisation for the CPSD $S_{xx}(\omega)$ of $X_t$.

Let $\psi(\omega)$ be the transfer function of the process $X_t$. It then satisfies the spectral factorisation
\begin{equation}
	S_{xx}(\omega) = v |\psi(\omega)|^2
\end{equation}
with $\psi(0) = 1$. We may now calculate (we denote terms we don't need by ``$\cdots$'').
\begin{align*}
	S(\omega)
	&= |\Delta(\omega)|^{-2} \twomat{1-a_{yy}z}{a_{xy}z}\cdots\cdots \twomat{\sigma_{xx}}{\sigma_{xy}}{\sigma_{yx}}{\sigma_{yy}} \twomat{1-a_{yy}\cz}\cdots{a_{xy}\cz}\cdots \\
	&= |\Delta(\omega)|^{-2} \twomat{1-a_{yy}z}{a_{xy}z}\cdots\cdots \twomat{\sigma_{xx}(1-a_{yy}\cz)+\sigma_{xy} a_{xy}\cz}\cdots{\sigma_{yx}(1-a_{yy}\cz)+\sigma_{yy} a_{xy}\cz}\cdots
\end{align*}
We now calculate [with $z =e^{-i\omega}$ so that $\real{z} = \frac12(z+\cz) = \cos\omega$]:
\begin{align*}
	S_{xx}(\omega)
	&= |\Delta(\omega)|^{-2} \bracc{(1-a_{yy}z) [\sigma_{xx}(1-a_{yy}\cz)+\sigma_{xy} a_{xy}\cz)] + a_{xy}z [\sigma_{yx}(1-a_{yy}\cz)+\sigma_{yy} a_{xy}\cz)]} \\
	&= |\Delta(\omega)|^{-2} \bracc{\sigma_{xx} |1-a_{yy}z|^2 + \sigma_{xy} a_{xy} [(1-a_{yy}z)\cz+ (1-a_{yy}\cz)z] + \sigma_{yy} a_{xy}^2 z\cz  } \\
	&= |\Delta(\omega)|^{-2} \bracc{\sigma_{xx} [1-a_{yy} (z+\cz)+a_{yy}^2] + \sigma_{xy} a_{xy} (z+\cz-2a_{yy}) + \sigma_{yy} a_{xy}^2} \\
	&= |\Delta(\omega)|^{-2} \bracc{\sigma_{xx} [1-2a_{yy}\cos\omega+a_{yy}^2] + 2\sigma_{xy} a_{xy} (\cos\omega-a_{yy}) + \sigma_{yy} a_{xy}^2}
\end{align*}
and finally
\begin{equation}
	S_{xx}(\omega) = |\Delta(\omega)|^{-2} (P - Q\cos\omega) \label{eq:bvSxx}
\end{equation}
where we have set
\begin{subequations}
\begin{align}
	P &= \sigma_{xx} (1+a_{yy}^2) - 2\sigma_{xy} a_{xy} a_{yy} + \sigma_{yy} a_{xy}^2 \\
	Q &= 2 (\sigma_{xx} a_{yy} - \sigma_{xy} a_{xy})
\end{align}%
\end{subequations}
The form of this expression suggests that the transfer function $\psi(\omega)$ should take the form
\begin{equation}
	\psi(\omega) = \Delta(\omega)^{-1} (1-b z)
\end{equation}
for some constant $b$. Note that this implies that $X_t$ is ARMA(2,1). Then $|\psi(\omega)|^2 = |\Delta(\omega)|^{-2} (1+b^2 - 2b\cos\omega)$ and the spectral factorisation $S_{xx}(\omega) = v |\psi(\omega)|^2$ now reads:
\begin{equation}
	v (1+b^2 - 2b\cos\omega) = P - Q\cos\omega
\end{equation}
This must hold for at point on the unit circle---\ie, for all $\omega$---so we must have
\begin{align}
	v (1+b^2) &= P \\
	v b &= \tfrac12 Q
\end{align}
We may now solve for $v$. The second equation gives $v^2 b^2 = \tfrac14 Q^2$, so multiplying the first equation through by $v$ we obtain the quadratic equation for $v$:
\begin{equation}
	v^2 -Pv + \tfrac14 Q^2 = 0
\end{equation}
with solutions
\begin{equation}
	v = \frac12 \bracr{P \pm \sqrt{P^2-Q^2}}
\end{equation}
We need to take the ``$+$'' solution, as this yields the correct (zero) result for the null case $a_{xy} = 0$. Note that only the $Y\to X$ ``causal'' coefficient $a_{xy}$ and the $Y$ autoregressive coefficient $a_{yy}$ appear in the expression for $F_{Y \to X}$.

From \eqref{eq:sgc}, the spectral GC from $Y \to X$ is
\begin{equation}
	f_{Y \to X}(\omega) = \log\frac{P-Q\cos\omega}{P-Q\cos\omega - a_{xy}^2 \sigma_{yy|x}}
\end{equation}
where
\begin{equation}
	\sigma_{yy|x} = \sigma_{yy} - \frac{\sigma_{xy}^2}{\sigma_{xx}} = \sigma_{yy}\bracr{1-\kappa^2}
\end{equation}
with $\displaystyle \kappa = \frac{\sigma_{xy}}{\sqrt{\sigma_{xx}\sigma_{yy}}}$ the residuals correlation.

For the sampling distributions, we shall also need the (inverse of) the covariance matrix $\Gamma_0$ of the process $[X_t^\trop \; Y_t^\trop]^\trop$ on the null space $a_{xy} = 0$. Solving the DLYAP equation $\Gamma_0 - A\Gamma_0 A^\trop = \Sigma$ for $\Gamma_0 = \twomat prrq$ yields
\begin{subequations}
\begin{align}
	p &= \bracr{1-a_{xx}^2}^{-1} \sigma_{xx} \\
	r &= (1-a_{xx}a_{yy})^{-1} \bracs{\sigma_{xy} + a_{xx}a_{yx} \bracr{1-a_{xx}^2}^{-1} \sigma_{xx}} \\
	q &= \bracr{1-a_{yy}^2}^{-1} \bracs{\sigma_{yy} + 2a_{yy}a_{yx} (1-a_{xx}a_{yy})^{-1} \sigma_{xy} + a_{yx}^2 (1+a_{xx}a_{yy}) \bracr{1-a_{xx}^2}^{-1} (1-a_{xx}a_{yy})^{-1} \sigma_{xx}}
\end{align}%
\end{subequations}
and we have in particular
\begin{equation}
	\omega_{yy} = [\Gamma_0^{-1}]_{yy} = \frac p{pq-r^2} \label{eq:omyy}
\end{equation}
Note also that in the null case $a_{xy} = 0$, the spectral radius is $\rho = \max\bracr{|a_{xx}|, |a_{yy}|}$.

\section{Parametrised sampling of the VAR model space} \label{sec:ranvar}

Consider, for given number of variables $n$ and model order $p$, the parameter space $\Theta = \{(A,\Sigma) : A \text{ is } n \times pn \text{ with } \rho(A) < 1\,, \Sigma \text{ is } n \times n \text{ positive-definite}\}$ of VAR($p$) models. Firstly, we note that the residuals covariance matrix $\Sigma$ can be taken to be a \emph{correlation} matrix; this can always be achieved by a rescaling of variables leaving Granger causalities invariant. Further GC invariances under linear transformation of variables \citep{Barrett:2010} allow further effective dimensional reduction of $\Theta$; however, even under these general transformations, and under the constraint $\rho(A) < 1$, the quotient space of $\Theta$ has infinite Lebesgue measure\footnote{Although the space of $n \times n$ correlation matrices has finite measure.}; thus we cannot generate uniform variates (it is questionable whether this would in any case be appropriate to a given empirical scenario). Here we utilise a practical and flexible scheme for generation of variates on $\Theta$, parametrised by spectral radius $\rho$, log-generalised correlation\footnote{For Gaussian covariance matrices, log-generalised correlation coincides with \emph{multi-information} \citep{StudenyVejnarova:1998}. If $R = (\rho_{ij})$ is a correlation matrix with all $\rho_{ij} \ll 1$ for $i \ne j$, then $-\log|R| \approx \sum_{i<j} \rho_{ij}^2$.} $\gamma = -\log|\Sigma| + \sum_i \log\Sigma_{ii}$, and population GC $F = F_{\bY\to\bX}(\btheta)$, all of which have a critical impact on GC sampling distributions.

To generate a random correlation matrix $\Sigma$ of dimension $n$ with given generalised correlation $\gamma$, we use the following algorithm:
\begin{enumerate}
	\item Starting with an $n \times n$ matrix with components iid $\sim\snormal(0,1)$, we compute its QR-decomposition $[Q,R]$. The matrix $M_{ij} = Q_{ij}\cdot\sign(R_{jj})$ is then a random orthogonal matrix.
	\item Next we create a random $n$-dimensional variance vector $\bv$ with components $v_i$ iid $\sim\chi^2(1)$. The matrix $ V = M\cdot\diag\bv\cdot M^\trop$ is then positive-definite, and for the corresponding correlation matrix $\Sigma_{ij} = V_{ij}/\sqrt{V_{ii}V_{jj}}$ we have $\gamma^* = -\sum_i \log v_i + \sum_i \log V_{ii}$. \\[-0.7em]

	If necessary, we repeat steps 1,2 until $\gamma^* \ge \gamma$ (this may fail if $\gamma$ is too large).
	\item Using a binary chop, we find a constant $c$ such that, iteratively replacing $\bv \leftarrow \bv+c$,  $\gamma^*$ falls within an acceptable tolerance of $\gamma$ (this generally converges). The correlation matrix $\Sigma$ is then returned,
\end{enumerate}
For a VAR coefficients matrix sequence $A = [A_1\;A_2\;\ldots\;A_p]$, the spectral radius $\rho(A)$ is given by \eqref{eq:specrad}. If $\lambda$ is a constant, it is easy to show that if $\tilde A$ is the sequence $[\lambda A_1\;\lambda^2 A_2\;\ldots\;\lambda^p A_p]$, then $\rho\big(\tilde A\big) = \lambda\rho(A)$. Thus any VAR coefficients sequence may be exponentially weighted so that its spectral radius takes a given value. Such weighting, however, has the side-effect of exponential decay of the $A_k$ with lag $k$, which is, anecdotally, unrealistic\footnote{At least, in the authors' experience, for neural or econometric data.}. We observe empirically that we can compensate for this decay reasonably consistently across number of variables and model orders by scaling all coefficients by $A_k$ by $e^{-\sqrt p w}$ for some constant $w$; here we choose $w = 1$, which generally achieves a more realistic gradual and approximately linear decay. To generate a random VAR model with given generalised correlation $\gamma$ and given spectral radius $\rho$, our procedure is as follows:
\begin{enumerate}
	\item We generate a random correlation matrix $\Sigma$ with generalised correlation $\gamma$ as described above.
	\item We generate $p$ $n \times n$ coefficient matrices $A_k$ with components iid $\sim\snormal(0,1)$. The $A_k$ are the weighted uniformly by $e^{-\sqrt p w}$.
\end{enumerate}
To enforce the null condition $A_{k,xy} = 0$,
\begin{enumerate}
	\setcounter{enumi}{2}
	\item The $A_{k,xy}$ components are all set to zero.
	\item The $A_k$ coefficients sequence is scaled exponentially by an appropriate constant $\lambda$, so as to achieve the given spectral radius $\rho$.
\end{enumerate}
To instead enforce a given (non-null) population GC value $F$,
\begin{enumerate}
	\setcounter{enumi}{2}
	\item The $A_{k,xy}$ components are scaled uniformly by a constant $c$.
	\item The $A_k$ coefficients sequence is scaled exponentially by an appropriate constant $\lambda$, so as to achieve the given spectral radius $\rho$. \\[-0.7em]

	Under steps $3,4$ the population GC depends monotonically on $c$; consequently,
	\item We perform a binary chop on $c$, iterating steps $3,4$ until the GC is within an acceptable tolerance of $F$ (this generally converges quickly).
\end{enumerate}
In all simulations except for the bivariate model (\secref{sec:examp}), we used $\gamma = 1$; spectral radii and population GC values are as indicated in the plots. Convergence tolerances were set to\footnote{Approximately $1.5 \times 10^{-8}$ under the IEEE 754-2008 binary$64$ floating point standard.} $\sqrt{\text{machine }\eps}$.

\vspace{2em}
\bibliographystyle{apalike}
\bibliography{gcsreg}


\end{document}